\documentclass[sort&compress,3p]{elsarticle}
\usepackage{amsmath}
\usepackage{amssymb,mathtools}
\usepackage{array}
\usepackage{multirow}
\usepackage{graphicx}
\usepackage{bbm}
\usepackage{tikz}
\usepackage[normalem]{ulem}
\usepackage{dsfont}
\usepackage{float}
\usepackage{subfigure}
\usepackage{hyperref}
\usepackage{pdf14}
\usepackage{csl}

\usepackage{algpseudocode,algorithm}
\usepackage{tikz}

\newcommand{\K}{\mathcal{K}}
\newcommand{\R}{\mathbb{R}}
\newcommand{\I}{\mathbf{I}}
\newcommand{\V}{\mathbf{V}}
\newcommand{\W}{\mathbf{W}}
\newcommand{\Tb}{\mathbf{T}}
\newcommand{\J}{\mathbf{J}}

\newcommand{\A}{\mathbf{A}}

\newtheorem{theorem}{Theorem}
\newtheorem{remark}{Remark}
\newtheorem{lemma}{Lemma}
\newtheorem{corollary}{Corollary}
\newtheorem{definition}{Definition}
\newproof{proof}{Proof}

\begin{document}

\cslauthor{Ross Glandon, Paul Tranquilli and Adrian Sandu}
\cslyear{19}
\cslreportnumber{8}
\cslemail{rossg42@vt.edu, tranquilli1@llnl.gov, sandu@cs.vt.edu}
\csltitle{Biorthogonal Rosenbrock-Krylov time discretization methods}
\csltitlepage

\begin{frontmatter}

  \title{Biorthogonal Rosenbrock-Krylov time discretization methods}
  
  \author[csl]{Ross Glandon\corref{cor}}
  \ead{rossg42@vt.edu}
  \author[csl,llnl]{Paul Tranquilli}
  \ead{tranquilli1@llnl.gov}
  \author[csl]{Adrian Sandu}
  \ead{sandu@cs.vt.edu}
  
  \address[csl]{Computational Science Laboratory, Department of Computer Science, Virginia Tech. 2202 Kraft Drive, Blacksburg, Virginia 24061}
  \address[llnl]{Lawrence Livermore National Laboratory. 7000 East Avenue, Livermore, California 94550}
  \cortext[cor]{Corresponding author}

\begin{abstract}
Many scientific applications require the solution of large initial-value problems, such as those produced by the method of lines after semi-discretization in space of partial differential equations. The computational cost of implicit time discretizations is dominated by the solution of nonlinear systems of equations at each time step. In order to decrease this cost, the recently developed Rosenbrock-Krylov (ROK) time integration methods extend the classical linearly-implicit Rosenbrock(-W) methods, and make use of a Krylov subspace approximation to the Jacobian computed via an Arnoldi process. Since the ROK order conditions rely on the construction of a single Krylov space, no restarting of the Arnoldi process is allowed, and the iterations quickly become expensive with increasing subspace dimensions. This work  extends the ROK framework to make use of the Lanczos biorthogonalization procedure for constructing Jacobian approximations. The resulting new family of methods is named biorthogonal ROK (BOROK). The Lanczos procedure's short two-term recurrence allows BOROK methods to utilize larger subspaces for the Jacobian approximation, resulting in increased numerical stability of the time integration at a reduced computational cost. Adaptive subspace size selection and basis extension procedures are also developed for the new schemes. Numerical experiments show that for stiff problems, where a large subspace used to approximate the Jacobian is required for stability, the BOROK methods outperform the original ROK methods.
\end{abstract}

\begin{keyword}
time integration \sep PDE \sep ODE \sep Rosenbrock \sep Krylov \sep biorthogonal Lanczos
\end{keyword}

\end{frontmatter}

\section{Introduction}
In this paper we are concerned with numerical solutions of the initial-value problem:
\begin{equation}
\label{eqn:ivp}
  \frac{dy}{dt} = f(t, y), \quad t_0 \leq t \leq t_F, \quad y(t_0) = y_0; \quad y(t) \in \R^N, \quad f:\R\times\R^N \rightarrow \R^N.
\end{equation}
A large number of scientific and engineering simulations rely on problems of this structure, with examples ranging from densely coupled systems of ordinary differential equations (ODEs) such as those arising from chemical kinetics, aerosol dynamics, or multibody dynamics \cite{Sandu_2003_aerosolFramework,Sandu_2010_KPP22_TLM_ADJ,Sandu_2009_ParameterExplicit} to large sparse systems produced by the method of lines after semi-discretization in space of partial differential equations (PDEs) such as fluid dynamics or electromagnetics \cite{Munson_2013_aa,Ulaby_2010_aa}.

Time integration methods for solving initial-value problems \eqref{eqn:ivp} are traditionally categorized as either explicit or implicit. Explicit methods use previously obtained information in order to calculate the solution at the following timestep, are simple in structure and inexpensive, but require a problem-dependent timestep size restriction. Implicit methods relax the timestep size restriction by incorporating future information in the computation of the solution, but adds the requirement to solve a nonlinear system of equations at each timestep, considerably increasing the per-step cost of the method.

Much current research seeks to blur the lines between implicit and explicit time integration methods. IMEX methods \cite{Ascher1995,Ascher1997,Frank1997,Pareschi2005,Hundsdorfer2007,Boscarino2009,Sandu_2012_ICCS-IMEX,Sandu_2016_highOrderIMEX-GLM,Sandu_2010_extrapolatedIMEX,Sandu_2014_IMEX-RK,Sandu_2014_IMEX_GLM_Extrap,Sandu_2014_IMEX-GLM,Sandu_2015_IMEX-TSRK,Sandu_2015_Stable_IMEX-GLM} treat a split IVP with a pair of coupled methods, one implicit, the other explicit. Multirate methods \cite{Rice1960,Andrus1979,Andrus1993,Gear1984,Engstler1997,Gunther2001,Sandu2009,Constantinescu2013,Gunther2016,Sandu_2019_MR-GARK_High-Order,Sandu_2018_MRI-GARK-coupled,Sandu_2018_MRI-GARK,Sandu_2007_MR_RK2} use different timestep sizes to solve different components of the problem. W-methods (first introduced in the context of Rosenbrock schemes \cite{Steihaug1979,Rang2005,Rahunanthan2010,Schmitt1995,Wensch2005}) use the problem Jacobian, $f_y(t, y) = \J$, explicitly in the computational process (in either a linearly-implicit or matrix exponential formula) and use additional order conditions to eliminate the errors associated with using an approximate Jacobian. K-methods \cite{Sandu_2014_ROK,Sandu_2014_expK,Sandu_2019_EPIRKW} are a subclass of W-methods that makes use of a particular Krylov-based approximation to the problem Jacobian, $\J$, resulting in simpler order conditions and some potential computational advantage. 

Rosenbrock-Krylov (ROK) methods, the first K-methods, were originally proposed in \cite{Sandu_2014_ROK} as an extension to Rosenbrock-W methods, making use of an Arnoldi process to produce the K-method's Jacobian approximation. Like W-methods, K-methods decouple accuracy and stability concerns, with an order $p$ ROK method only requiring a Krylov space of size $p$ to retain full order of accuracy. However, for large stiff problems, a much larger Krylov space $m \gg p$ may be required to achieve reasonable stability. In this case, the Arnoldi process used in the ROK method may not be computationally feasible, due to the $m$-term recurrence appearing in the orthonormalization of the Krylov basis matrix. In order to overcome this challenge, this paper extends ROK methods to make use of biorthogonal sequences (which requires a fixed 3-term recurrence to produce the Krylov basis matrices) in the calculation of the Krylov-based Jacobian approximation matrix. We call the new schemes Biorthogonal Rosenbrock-Krylov (BOROK) methods.

The remainder of the paper is organized as follows. Section \ref{sec:ROK} reviews the ROK time integration methods. Section \ref{sec:BOROK} derives the BOROK methods that extend ROK to use the Lanczos biorthogonalization procedure. Sections \ref{sec:BOROKadapt} and \ref{sec:BOROKext} discuss practical stability-enhancing tools used in implementations of BOROK. Section \ref{sec:results} reports the results of numerical experiments that compare the new methods with the base ROK methods. Section \ref{sec:conclusion} draws the conclusions of this work.

\section{Rosenbrock-Krylov methods}
\label{sec:ROK}

Rosenbrock-Krylov methods \cite{Sandu_2014_ROK} are a recent extension to the classical Rosenbrock (ROS) and Rosenbrock-W (ROW) methods \cite[Chapter IV.7]{Hairer1996}. ROS methods are linearly-implicit schemes requiring only a single linear system solve per stage. ROW methods are an extension of ROS that accounts for approximate Jacobian matrices, at the cost of vastly more order conditions for high order methods. The general form for an autonomous Rosenbrock method is \cite[Definition 7.1]{Hairer1996}:
\begin{subequations}
\label{eqn:ros-general-form}
\begin{align}
  F_i & = \displaystyle f\left(y_n + \sum_{j=1}^{i-1}\alpha_{i,j} k_j\right), \label{eqn:ros-func-eq}\\
  k_i & = \displaystyle \varphi\left(h\gamma\A\right) \left(h F_i + h\A \sum_{j=1}^{i-1} \gamma_{i,j} k_j \right), \label{eqn:ros-stage-eq}\\
  y_{n+1} & = \displaystyle y_n + \sum_{i=1}^s b_i k_i, \label{eqn:ros-step-eq}
\end{align}
\end{subequations}
where $\A$ is the exact Jacobian $\J(y_n)$ for Rosenbrock methods, or an arbitrary approximation for Rosenbrock-W methods. The matrix function $\varphi(z) = 1/(1 - z)$ produces Rosenbrock(-W) methods, while $\varphi(z) = 1$ gives explicit Runge-Kutta schemes, and $\varphi(z) = \left(e^{z} - 1\right)/z$ gives exponential methods.

Throughout this paper, we restrict formulas to the autonomous form of the methods for simplicity, as the nonautonomous form can be easily derived by applying the methods to an augmented system as described in \cite[Equations 2.11 and 2.12]{Sandu_2014_ROK}

Whereas ROW methods admit any arbitrary approximation of the Jacobian, Rosenbrock-Krylov (ROK) methods restrict themselves to a particular, computationally-favorable, Krylov subspace approximation of the Jacobian. At each step ROK methods construct a Krylov space from the ODE right-hand side (RHS) function $f$, and its Jacobian $\J$, both evaluated at the current time and approximate numerical solution $(t_n,y_n)$:
\[
  \K_m\left(\J, f\right) = \text{span}\left(f, \J f, \J^2 f, \dots, \J^{m-1}f\right) = \text{span}\left(\V_m\right).
\]
The Krylov space described by the orthonormal basis matrix $\V_m$ computed via Arnoldi's method \cite[Chapter 6.3]{Saad2003} (or the symmetric Lanczos algorithm \cite[Ch. 6.6]{Saad2003} for symmetric Jacobians). This basis matrix is then used to define the following approximate Jacobian:
\[
  \A \coloneqq \V_m\, \V_m^T\, \J\, \V_m\, \V_m^T.
\]
This approximation leads to both a meaningful reduction in the number of order conditions compared to ROW methods, and a computationally convenient reduced form. However, in the case of a non-symmetric Jacobian for very large stiff problems, the dimension $m$ of the Krylov space needs to be large. In this case the $m$-term recurrence for the orthonormalization of $\V_m$, which has a cost $\mathcal{O}(m^2)$, can become infeasible. In this paper we extend ROK methods to allow the use of Lanczos biorthogonalization, which has a short three-term recurrence, and is computationally favorable when large subspace dimensions are required. The new schemes are named Biorthogonal Rosenbrock-Krylov methods.

\section{Biorthogonal Rosenbrock-Krylov (BOROK) methods}
\label{sec:BOROK}

\subsection{Lanczos biorthogonalization and the approximate Jacobian}

Lanczos biorthogonalization (Algorithm \ref{alg:lanczos_biorth}) produces a pair of biorthogonal basis matrices, $\V_m$ and $\W_m$, for two $m$-dimensional Krylov subspaces:
\begin{align*}
\K_m\left(\J, f\right) & = \text{span}\left(f, \J f, \J^2 f, \dots, \J^{m-1}f\right) = \text{span}\left(\V_m\right), \\
\K_m\left(\J^T, f\right) & = \text{span}\left(f, \J^T f, (\J^T)^2 f, \dots, (\J^T)^{m-1}f\right) = \text{span}\left(\W_m\right).
\end{align*}
While in general the two Krylov spaces are defined using different vectors, here the same vector $f$ is used for both spaces. In addition, the Lanczos biorthogonalization algorithm produces the tridiagonal matrix $\Tb_m \in \R^{m \times m}$ which, together with $\V_m$ and $\W_m$, satisfies the following biorthogonal relations \cite[Proposition 7.1]{Saad2003}:
\begin{subequations}
\label{eqn:biorthoprop}
\begin{align}
  \V_m^T\, \W_m & = \mathbf{I}_m, \label{eqn:biortho-id}\\
  \J\,\V_m & = \V_m\, \Tb_m + \theta_{m+1}\,v_{m+1}\,e_m^T, \label{eqn:biortho-jv}\\
  \J^T\W_m & = \W_m\, \Tb_m^T + \beta_{m+1}w_{m+1}\,e_m^T, \label{eqn:biortho-jtw}\\
  \Tb_m & = \W_m^T\, \J\, \V_m. \label{eqn:biortho-tmat}
\end{align}
\end{subequations}

\begin{algorithm}[ht]
\caption{Lanczos Biorthogonalization \cite[Algorithm 7.1]{Saad2003}}
\label{alg:lanczos_biorth}
\begin{algorithmic}[1]
   \State Choose two vectors $v_1$, $w_1$ such that $(v_1, w_1) = 1$.
   \State Set $\beta_1 = \theta_1 = 0$, $w_0 = v_0 = 0$
   \For{$j = 1,2,\dots,m$}
     \State $\kappa_j = \left(Av_j, w_j\right)$
     \State $\hat{v}_{j+1} = A v_j - \kappa_j v_j - \beta_j v_{j-1}$
     \State $\hat{w}_{j+1} = A^T w_j - \kappa_j w_j - \theta_j w_{j-1}$
     \State $\theta_{j+1} = \left|\left(\hat{v}_{j+1}, \hat{v}_{j+1}\right)\right|^{1/2}$
     \State $\beta_{j+1} = \left(\hat{v}_{j+1}, \hat{w}_{j+1}\right) / \theta_{j+1}$
     \State $w_{j+1} = \hat{w}_{j+1} / \beta_{j+1}$
     \State $v_{j+1} = \hat{v}_{j+1} / \theta_{j+1}$
   \EndFor
\end{algorithmic}
\end{algorithm}

%
%

\subsection{BOROK method formulation}

\begin{definition}[BOROK methods]
A BOROK method defined in the full space is the ROW method \ref{eqn:ros-general-form}
%
with the matrix function $\varphi(z) = 1/(1 - z)$,
where the Jacobian approximation $\A$ is the following biorthogonal projection \eqref{eqn:krylovapproxmat} of the exact Jacobian $\J(y_n)$
\begin{equation}
  \label{eqn:krylovapproxmat}
  \A \coloneqq \V_m\,\Tb_m \,\W_m^T = \V_m\, \W_m^T\, \J\, \V_m\, \W_m^T.
\end{equation}
\end{definition}

We establish several properties of the approximate Jacobian \eqref{eqn:krylovapproxmat}.
\begin{lemma}[Powers of the approximate Jacobian]
\label{lem:kmatpowers}
  \begin{equation}
   \label{eqn:kmatpowers}
   \A^i = \V_m\, \Tb_m^i\, \W_m^T.
  \end{equation}
\end{lemma}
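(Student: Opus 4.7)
The plan is to prove the identity by induction on $i$, exploiting the biorthogonality relation \eqref{eqn:biortho-id} as the only nontrivial ingredient.

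For the base case $i = 1$, the statement $\A = \V_m\, \Tb_m\, \W_m^T$ is immediate from the definition \eqref{eqn:krylovapproxmat} of the approximate Jacobian.

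For the inductive step, assume $\A^i = \V_m\, \Tb_m^i\, \W_m^T$. Then I would compute
\begin{equation*}
  \A^{i+1} \;=\; \A\cdot \A^i \;=\; \bigl(\V_m\, \Tb_m\, \W_m^T\bigr)\bigl(\V_m\, \Tb_m^i\, \W_m^T\bigr) \;=\; \V_m\, \Tb_m \bigl(\W_m^T\, \V_m\bigr)\, \Tb_m^i\, \W_m^T.
\end{equation*}
The biorthogonality relation \eqref{eqn:biortho-id} gives $\V_m^T\, \W_m = \I_m$, and transposing yields $\W_m^T\, \V_m = \I_m$. Substituting this into the middle of the expression collapses the inner factor and produces $\V_m\, \Tb_m^{i+1}\, \W_m^T$, completing the induction.

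The argument is entirely routine; there is no substantive obstacle. The only thing worth flagging is that one must take the transpose of \eqref{eqn:biortho-id} to obtain $\W_m^T\, \V_m = \I_m$, since that is the direction in which biorthogonality is actually used when collapsing the interior product. The result can be viewed as a general fact: for any oblique projection of the form $\V_m\, \W_m^T\, \J\, \V_m\, \W_m^T$ with biorthogonal bases, the matrix $\V_m\, \W_m^T$ acts as a projector that idempotently annihilates products, so powers reduce to powers of the compressed operator $\Tb_m = \W_m^T\, \J\, \V_m$.
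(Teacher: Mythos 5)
Your proof is correct and follows essentially the same route as the paper's: induction on $i$, with the base case read off from the definition \eqref{eqn:krylovapproxmat} and the inductive step collapsing the interior factor via $\W_m^T\,\V_m = \I_m$, the transpose of \eqref{eqn:biortho-id}. The only (immaterial) difference is that you multiply $\A\cdot\A^i$ on the left while the paper writes $\A^{i-1}\cdot\A$ on the right.
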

\begin{proof}
  For the base case $i = 1$, the lemma is true by the definition \eqref{eqn:krylovapproxmat} of $\A$. Continuing with the inductive proof, assuming the result holds for $i-1$,  the result for $i$ follows:
  \begin{align*}
    \A^{i-1} & = \V_m\, \Tb^{i-1}\, \W_m^T, \\
    \A^i & = \V_m\, \Tb_m^{i-1}\, \underbrace{\W_m^T\, \V_m}_{\mathbf{I}_m}\, \Tb_m\, \W_m^T = \V_m\, \Tb_m^{i}\, \W_m^T.
  \end{align*}
  \qed
\end{proof}

\begin{lemma}[Matrix functions of the approximate Jacobian]
  \label{lem:phifuncs}
  An analytic matrix function $\varphi$ of the full space approximate Jacobian $\A$ can be written in terms of the matrix function of the reduced matrix $\Tb_m$ as follows:
  \begin{equation}
  \label{eqn:phi}
    \varphi\left(h\gamma\A\right) = \varphi(0)\left(\I_m - \V_m\,\W_m^T\right) + \V_m\, \varphi\left(h\gamma\,\Tb_m\right) \W_m^T.
  \end{equation}
\end{lemma}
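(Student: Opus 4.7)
The plan is to expand $\varphi$ as a power series around zero and invoke the previous lemma termwise. Write
\[
\varphi(z) \;=\; \sum_{i=0}^{\infty} c_i\, z^{i}, \qquad c_0 = \varphi(0),
\]
which converges on the spectrum of $h\gamma\A$ by analyticity. Substituting $z = h\gamma\A$ and separating the constant term gives
\[
\varphi(h\gamma\A) \;=\; c_0\,\I + \sum_{i=1}^{\infty} c_i (h\gamma)^{i}\,\A^{i}.
\]
The reason for pulling out $i=0$ is that Lemma~\ref{lem:kmatpowers} only identifies the positive powers of $\A$ with $\V_m\,\Tb_m^{i}\,\W_m^T$; the zeroth power is the full-space identity $\I$, which is generally different from $\V_m\W_m^T$.

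Next I would substitute the formula from Lemma~\ref{lem:kmatpowers} into every $i \ge 1$ term and factor the outer matrices:
\[
\sum_{i=1}^{\infty} c_i (h\gamma)^{i}\,\A^{i}
\;=\; \V_m \left( \sum_{i=1}^{\infty} c_i (h\gamma\,\Tb_m)^{i} \right) \W_m^T
\;=\; \V_m\bigl(\varphi(h\gamma\,\Tb_m) - c_0\,\I_m\bigr)\W_m^T.
\]
Combining this with the separated $c_0\,\I$ term and using $c_0 = \varphi(0)$ yields
\[
\varphi(h\gamma\A) \;=\; \varphi(0)\,\I \;-\; \varphi(0)\,\V_m\W_m^T \;+\; \V_m\,\varphi(h\gamma\,\Tb_m)\,\W_m^T,
\]
which is the claimed identity (reading the $\I_m$ in the statement as the full-space identity, since $\V_m\W_m^T$ is an $N\times N$ operator).

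There is no real obstacle here; the only subtlety is the careful separation of the $i=0$ term so that Lemma~\ref{lem:kmatpowers} is applied only where it is valid, and then the bookkeeping that identifies the leftover series with $\varphi(h\gamma\Tb_m) - \varphi(0)\I_m$. An analyticity/convergence remark on the spectrum of $\A$ (which is contained in the spectrum of $\Tb_m$ together with $0$, by Lemma~\ref{lem:kmatpowers}) justifies interchanging the infinite sum with the multiplication by $\V_m$ and $\W_m^T$ on the outside.
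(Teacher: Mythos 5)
Your proposal is correct and follows essentially the same route as the paper's own proof: a power-series expansion of $\varphi$ about zero, termwise application of Lemma~\ref{lem:kmatpowers} to the positive powers, and identification of the leftover series with $\varphi(h\gamma\,\Tb_m) - \varphi(0)\,\I_m$. Your parenthetical observation that the $\I_m$ in \eqref{eqn:phi} should be read as the full-space identity $\I_N$ is also right, and is consistent with how the paper's own proof writes its final displayed equation.
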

\begin{proof}
The analytic matrix function $\varphi(z)$ has a Taylor series about $z = 0$:
  \[
    \varphi(z) = \sum_{i=0}^{\infty} c_i \frac{z^i}{i!}.
  \]
  Applying $\varphi(z)$ to the scaled matrix $h\gamma\A$ gives
  \[
    \varphi\left(h\gamma\A\right) = c_0\I_m + \sum_{i=1}^\infty c_i\frac{\left(h\gamma\right)^i}{i!} \A^i,
  \]
  and using Lemma \ref{lem:kmatpowers} we obtain:
  \begin{equation}
    \label{eqn:phifull}
    \varphi\left(h\gamma\A\right) = c_0\I_m + \sum_{i=1}^\infty c_i\frac{\left(h\gamma\right)^i}{i!} \V_m\, \Tb_m^i\, \W_m^T.
  \end{equation}
  We can perform a similar expansion of functions of the reduced space matrix:
  \begin{equation}
    \label{eqn:phired}
    \V_m\, \varphi\left(h\gamma\,\Tb_m\right) \W_m^T = c_0\V_m\,\W_m^T + \sum_{i=1}^\infty c_i\frac{\left(h\gamma\right)^i}{i!} \V_m\, \Tb_m^i\, \W_m^T.
  \end{equation}
 Taking the difference between equations \eqref{eqn:phifull} and \eqref{eqn:phired} gives
  \[
    \varphi\left(h\gamma\A\right) - \V_m\, \varphi\left(h\gamma\,\Tb_m\right) \W_m^T = c_0\left(\I_N - \V_m\,\W_m^T\right),
  \]
  and noting that $c_0 = \varphi(0)$ from the Taylor series expansion we obtain the result \eqref{eqn:phi}.
  \qed
\end{proof}

\begin{definition}[Reduced space projections]
  \label{def:redprojections}
  We define the reduced space projections $\psi_i$ and $\lambda_i$ of the full space vectors $k_i$ and $F_i$:
  \begin{equation}
    \lambda_i := \W_m^T\, k_i, \qquad \psi_i := \W_m^T\, F_i.
  \end{equation}
  The vectors $k_i$ and $F_i$ decompose into one component in $\K_m(\J,f)$ and another component orthogonal to $\K_m(\J^T,f)$:
  \begin{equation}
  \label{eqn:decomposition}
    k_i = \underbrace{\V_m \lambda_i}_{\in \K_m(\J, f)} + \underbrace{\mu_i,}_{\in \K_m^\perp(\J^T, f)} \qquad F_i = \underbrace{\V_m \psi_i}_{\in \K_m(\J, f)} + \underbrace{\delta_i.}_{\in \K_m^\perp(\J^T, f)}
  \end{equation}
\end{definition}

\begin{theorem}[Reduced-space implementation of BOROK methods]
  \label{thm:borok-general-form}
  With the approximate Jacobian defined as in equation \eqref{eqn:krylovapproxmat}, the stage vectors $k_i$:
  \begin{equation}
  \label{eqn:borok_stage}
  k_i  =  \displaystyle \varphi\left(h\gamma\A\right) \left(h F_i + h\A \sum_{j=1}^{i-1} \gamma_{i,j} k_j \right)
  \end{equation}
 can be computed using only matrix functions evaluated in the reduced space as follows:
  \begin{subequations}
    \begin{align}
      \psi_i & = \W_m^T F_i, \\
      \lambda_i & = \varphi\left(h\gamma\,\Tb_m\right) \left(h \psi_i + h\Tb_m \sum_{j=1}^{i-1} \gamma_{i,j}\, \lambda_j \right), \\
      k_i & = \V_m\,\lambda_i + h \left(F_i - \V_m\,\psi_i\right).
    \end{align}
  \end{subequations}
\end{theorem}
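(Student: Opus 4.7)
The plan is to start from the full-space stage equation \eqref{eqn:borok_stage} and substitute the structural identities proved in Lemmas \ref{lem:kmatpowers} and \ref{lem:phifuncs}, then simplify using the biorthogonality relation \eqref{eqn:biortho-id}. Because $\varphi(z)=1/(1-z)$ for BOROK methods, I will use the fact $\varphi(0)=1$ throughout, so that the expansion in Lemma \ref{lem:phifuncs} becomes $\varphi(h\gamma\A) = (\I_N - \V_m\W_m^T) + \V_m\,\varphi(h\gamma\Tb_m)\,\W_m^T$.

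First I would rewrite the argument of $\varphi$ in \eqref{eqn:borok_stage} in a form that exposes the reduced-space projections. Using $\A=\V_m\Tb_m\W_m^T$ from \eqref{eqn:krylovapproxmat} together with $\W_m^T k_j = \lambda_j$ from Definition \ref{def:redprojections}, the sum inside the parenthesis becomes $h F_i + h\V_m\Tb_m\sum_{j=1}^{i-1}\gamma_{i,j}\lambda_j$. Then I would apply Lemma \ref{lem:phifuncs} term-by-term. For the $hF_i$ part, the first piece $(\I_N - \V_m\W_m^T)(hF_i)$ produces $hF_i - h\V_m\psi_i$, and the second piece $\V_m\varphi(h\gamma\Tb_m)\W_m^T(hF_i)$ produces $h\V_m\varphi(h\gamma\Tb_m)\psi_i$. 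For the $h\V_m\Tb_m\sum\gamma_{i,j}\lambda_j$ part, the projector $(\I_N-\V_m\W_m^T)$ annihilates it because $\W_m^T\V_m = \I_m$, while the second piece yields $h\V_m\varphi(h\gamma\Tb_m)\Tb_m\sum\gamma_{i,j}\lambda_j$.

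Collecting these four contributions I obtain
\begin{equation*}
k_i = h(F_i - \V_m\psi_i) + \V_m\,\varphi(h\gamma\Tb_m)\Bigl(h\psi_i + h\Tb_m\sum_{j=1}^{i-1}\gamma_{i,j}\lambda_j\Bigr),
\end{equation*}
and recognizing the quantity in large parentheses as exactly the definition of $\lambda_i$ gives the claimed decomposition $k_i=\V_m\lambda_i + h(F_i-\V_m\psi_i)$. The first equation $\psi_i=\W_m^T F_i$ is simply the definition from \ref{def:redprojections}.

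The calculation is essentially routine once Lemmas \ref{lem:kmatpowers} and \ref{lem:phifuncs} are in hand; the main thing to be careful about is the cancellation produced by the $(\I_N-\V_m\W_m^T)$ projector acting on range-of-$\V_m$ vectors, which is what collapses the $h\A\sum\gamma_{i,j}k_j$ contribution down to a purely reduced-space update of $\lambda_i$. A minor sanity check I would include is that the recursion for $\lambda_i$ is self-consistent, i.e.\ applying $\W_m^T$ to the final expression for $k_i$ indeed recovers $\lambda_i$: this follows because $\W_m^T\V_m=\I_m$ and $\W_m^T(F_i - \V_m\psi_i) = \psi_i - \psi_i = 0$.
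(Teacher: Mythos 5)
Your proposal is correct and follows essentially the same route as the paper's proof: both rely on Lemma \ref{lem:phifuncs} with $\varphi(0)=1$, the biorthogonality relation $\W_m^T\V_m=\I_m$, and the fact that the oblique projector $\I_N-\V_m\W_m^T$ annihilates vectors in the range of $\V_m$, differing only in that the paper first writes $k_j$ and $F_i$ explicitly in the decomposed form \eqref{eqn:decomposition} while you apply the split operator to $hF_i$ directly. Your closing consistency check that $\W_m^T k_i=\lambda_i$ plays the same role as the paper's extraction of the $\lambda_i$ and $\mu_i$ components, so nothing is missing.
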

\begin{proof}
  We begin the proof by substituting the $k_i$ and $F_i$ decompositions from \eqref{eqn:decomposition} into the stage equation \eqref{eqn:borok_stage}, using the definition of the approximate Jacobian \eqref{eqn:krylovapproxmat}, and applying Lemma \ref{lem:phifuncs} with $\varphi(z) = 1/(1-z)$. This gives:
  \begin{align*}
    k_i & = \V_m \lambda_i + \mu_i \\
        & = h \Big[\left(\I_N - \V_m\W_m^T\right) + \V_m \varphi\left(h\gamma\,\Tb_m\right)\W_m^T\Big] \left[\V_m\,\psi_i + \delta_i + \V_m\,\Tb_m\W_m^T \sum_{j=1}^{i-1} \gamma_{i,j} \left(\V_m\,\lambda_j + \mu_j\right)\right].
  \end{align*}
  Since $\W_m^T\,\V_m = \I_m$ from equation \eqref{eqn:biortho-id}, and using the fact that $\delta_i$ and $\mu_i$ are orthogonal to $\W_m$ by definition \eqref{eqn:decomposition}, the expression simplifies to:
  \[
    k_i = h\,\delta_i + h\,\V_m\,\varphi\left(h\gamma\,\Tb_m\right) \left[\psi_i + \Tb_m \sum_{j=1}^{i-1} \gamma_{i,j}\, \lambda_i\right].
  \]
The stage vector component orthogonal to $\K_m(\J^T, f)$ \eqref{eqn:decomposition} is:
  \[
    \mu_i = \left(\I_N - \V_m\W_m^T\right)\,k_i = h\,\delta_i.
  \]
Similarly, the stage vector component in $\K_m(\J, f)$ \eqref{eqn:decomposition} is:
  \[
    \lambda_i = \W_m^T\,k_i = h\,\varphi\left(h\gamma\,\Tb_m\right)\, \Big[\psi_i + \Tb_m \sum_{j=1}^{i-1} \gamma_{i,j}\, \lambda_j\Big].
  \]
  Recombine the elements of $k_i$ yields:
  \[
    k_i = \V_m\, \lambda_i + h\,\delta_i = \V_m\, \lambda_i + h\, \left(F_i - \V_m\,\psi_i\right),
  \]
  with the last expansion coming from the definition of $\delta_i$ in \eqref{eqn:decomposition}.
  \qed
\end{proof}


\begin{definition}[BOROK methods in reduced form]
\label{def:borok-reduced-form}
From theorem \ref{thm:borok-general-form}, the reduced form of an \emph{s}-stage BOROK method is:
\begin{subequations}
\label{eqn:borok-reduced-form}
\begin{align}
 F_i & = f\left(y_n + \sum_{j=1}^{i-1}\alpha_{i,j} k_j\right) \label{eqn:redstagef}\\
 \psi_i & = \W_m^T F_i \label{eqn:redstagepsi}\\
 \lambda_i & = \displaystyle\left(\I_m - h\gamma\,\Tb_m\right)^{-1} \left(h \psi_i + h\Tb_m \sum_{j=1}^{i-1} \gamma_{i,j}\, \lambda_j \right) \label{eqn:redstageeq}\\
 k_i & = \V_m \lambda_i + h \left(F_i - \V_m \psi_i\right) \label{eqn:redstagek}\\
 y_{n+1} & = y_n + \sum^s_{i=1} b_i k_i \label{eqn:redstepeq}
\end{align}
\end{subequations}

One step of a BOROK method in reduced space form is shown in Algorithm \ref{alg:borok_onestep}.
\end{definition}

\begin{algorithm}[ht]
\caption{One step of the reduced-space form Biorthogonal Rosenbrock-Krylov method.}
\label{alg:borok_onestep}
\begin{algorithmic}[1]
   \State Compute $\V_m$, $\W_m$, and $\Tb_m$ using the Lanczos biorthogonalization procedure.
   \For{$i = 1,\dots,s$}
     \State $F_i = \displaystyle f\left(y_n + \sum_{j=1}^{i-1}\alpha_{i,j} k_j\right)$
     \State $\psi_i = \W_m^T F_i$
     \State $\lambda_i = \displaystyle\left(\I_m - h\gamma\,\Tb_m\right)^{-1} \left(h \psi_i + h\Tb_m \sum_{j=1}^{i-1} \gamma_{i,j}\, \lambda_j \right)$
     \State $k_i = \V_m \lambda_i + h \left(F_i - \V_m \psi_i\right)$
   \EndFor
   \State $y_{n+1} = y_n + \sum^s_{i=1} b_i k_i$
\end{algorithmic}
\end{algorithm}

\begin{remark}
The implementation of the reduced form requires explicit storage of the $\W_m$ basis matrix so that the right-hand side evaluations $F_i$ for stages 2 to \emph{s} can be projected into the reduced space to produce $\psi_i$. This appears to prevent the optimizations made by BiCGSTAB, which does not compute any of the vectors for $\W_m$ at all. There are a wide variety of existing block iterative linear system solvers in the literature which are capable of solving multiple linear systems against the same matrix \cite{Guennouni2003, Heyouni2005}. However these block solvers usually require all linear system right-hand side vectors at the same time (which is not possible here due to the stage-to-stage dependency of $F_i$ on $k_{i-1}$). There are a few solvers that are formulated for sequentially dependent RHS vectors, but these seed system methods are based on CG or GMRES and do not make use of Lanczos biorthogonalization \cite{Chan1997,Parlett1980,Saad1987,Vandervorst1987}.
\end{remark}

\subsection{Order conditions}

We would like to make full use of the existing K-method order condition theory and the ROK method coefficients, as derived in \cite{Sandu_2014_ROK}. This derivation hinges on the recoloring of the TW-trees used to produce the order conditions for Rosenbrock-W methods (a type of two-color Butcher-trees \cite[Section IV.7]{Hairer1996} used to represent individual elementary differentials in a Taylor series expansion corresponding to either the exact Jacobian $\J$ or the approximate Jacobian $\A$) into the TK-trees \cite[Definition 3.3]{Sandu_2014_ROK}. This recoloring of trees hinges on the following lemma demonstrating that powers of the Krylov approximation matrix $\A$ applied to $f$ are equivalent to applying powers of the exact Jacobian $\J$ to $f$.

\begin{lemma}
  \label{lem:ordercondsat}
  For any $0 \leq i \leq m-1$, it holds that
  \begin{equation}
   \A^i f = \J^i f
  \end{equation}
  where $m = \text{dim}\left(\K_m(\J,f)\right)$.
\end{lemma}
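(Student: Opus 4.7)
The plan is to reduce everything to the identity $\J^i v_1 = \V_m \Tb_m^i e_1$ for $0\le i\le m-1$, where $v_1$ is the starting Lanczos vector, and then use biorthogonality together with the choice $v_1 \parallel f$ to translate this back to the claim about $f$.

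First, since both Krylov spaces in the biorthogonalization are generated from $f$, the starting vector satisfies $v_1 = \alpha f$ (equivalently $f = \alpha^{-1}\V_m e_1$) for some nonzero scalar $\alpha$. Combined with the biorthogonality relation \eqref{eqn:biortho-id}, this gives $\W_m^T f = \alpha^{-1} e_1$. Applying Lemma \ref{lem:kmatpowers} then yields
\begin{equation*}
  \A^i f \;=\; \V_m\, \Tb_m^i\, \W_m^T f \;=\; \alpha^{-1} \V_m\, \Tb_m^i\, e_1,
\end{equation*}
so the claim $\A^i f = \J^i f$ is equivalent to $\J^i v_1 = \V_m\, \Tb_m^i\, e_1$.

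The latter I would establish by induction on $i$. The base case $i=0$ is $v_1 = \V_m e_1$, which is immediate. For the inductive step, I apply the Arnoldi-type relation \eqref{eqn:biortho-jv}:
\begin{equation*}
  \J^i v_1 \;=\; \J\bigl(\V_m\, \Tb_m^{i-1}\, e_1\bigr) \;=\; \V_m\, \Tb_m^i\, e_1 \;+\; \theta_{m+1}\, v_{m+1}\, \bigl(e_m^T\, \Tb_m^{i-1}\, e_1\bigr).
\end{equation*}
The remaining task is to show that the residual term vanishes in the desired range. This is the one real content of the argument: because $\Tb_m$ is tridiagonal, $\Tb_m^{k}$ has bandwidth at most $k$, so its $(m,1)$ entry $e_m^T \Tb_m^{k} e_1$ is zero whenever $k < m-1$. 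Taking $k = i-1$ with $i\le m-1$ gives exactly what is needed.

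The main obstacle, such as it is, is being explicit about that tridiagonal/bandwidth fact and about how the common starting vector $f$ propagates through the biorthogonal basis; nothing else requires delicate estimates. Once these two pieces are in place, the induction closes and the theorem follows by multiplying through by the scalar $\alpha^{-1}$ that relates $f$ to $v_1$.
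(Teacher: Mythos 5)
Your argument is correct, but it takes a genuinely different route from the paper's proof. The paper never touches the tridiagonal structure of $\Tb_m$: it inducts directly on the claim $\A^i f = \J^i f$, using only the fact that $\V_m\,\W_m^T$ is an oblique projector acting as the identity on $\K_m(\J,f)$, so that $\V_m\,\W_m^T\,\J^{i-1}f = \J^{i-1}f$ and $\V_m\,\W_m^T\,\J^{i}f = \J^{i}f$ whenever $i\le m-1$. That argument is shorter and applies to any biorthogonal pair with $\mathrm{span}(\V_m)=\K_m(\J,f)$, independently of how $\Tb_m$ was generated. Your route instead passes to the reduced space, restating the claim as $\J^i v_1 = \V_m\,\Tb_m^i\,e_1$ and killing the residual term $\theta_{m+1}\,v_{m+1}\,(e_m^T\,\Tb_m^{i-1}\,e_1)$ from relation \eqref{eqn:biortho-jv} via the bandwidth of powers of a tridiagonal matrix. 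This is more computational but buys slightly more: it pinpoints exactly where exactness first fails (the $(m,1)$ entry of $\Tb_m^{m-1}$, i.e.\ at $i=m$), which is the usual sharpness statement for Krylov polynomial approximation. One cosmetic caveat: Lemma \ref{lem:kmatpowers} as proved covers $i\ge 1$ only (since $\A^0=\I_N\neq\V_m\,\W_m^T$ in general), so for $i=0$ you should simply note the claim is trivial rather than invoking that lemma; applied to $f$ your formula happens to remain valid because $f\in\mathrm{span}(\V_m)$, but it is cleaner to separate that case.
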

\begin{proof}
  Using the Definition \ref{eqn:krylovapproxmat} we have:
  \[\A^i f = \left(\V_m\, \W_m^T\, \J\, \V_m\, \W_m^T\right)^i f.\]
  Because the matrix $\V_m\, \W_m^T$ is an oblique projector into the span of $\K_m(\J,f)$, which contains both $f$ and $\J f$ for $m > 1$, we can easily prove the base case $i = 1$:
  \[
    \A f = \V_m\, \W_m^T\, \J\, \underbrace{\V_m\, \W_m^T f}_{f} = \V_m\, \W_m^T\, \J f = \J f.
  \]
  We then make the induction hypothesis that $\A^{i-1} f = \J^{i-1} f$ holds for some $2 \leq i \leq m-1$. The result for $i$ is:
  \[
    \A^i f = \A\, \A^{i-1} f = \A\, \J^{i-1}f = \V_m\, \W_m^T\, \J\, \underbrace{\V_m\, \W_m^T\, \J^{i-1} f}_{\J^{i-1}f} = \V_m\, \W_m^T\, \J^i f = \J^i f,
  \]
because both $\J^{i-1}f$ and $\J^i f$ are in $\K_m(\J,f)$.
  \qed
\end{proof}

Lemma \ref{lem:ordercondsat} allows to prove that the recoloring of the TW-trees into TK-trees for the Jacobian approximation \eqref{eqn:krylovapproxmat} is the same as for the Krylov projection approximation discussed in \cite{Sandu_2014_ROK}.

\begin{lemma}[Recoloring of linear subtrees using the approximate Jacobian \eqref{eqn:krylovapproxmat}]
When the Krylov matrix approximation \eqref{eqn:krylovapproxmat} is used, all linear TW-trees of order $k \leq M$ correspond to a single elementary differential, regardless of the color of their nodes.
\end{lemma}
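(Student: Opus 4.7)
The plan is to induct on the tree order $k$, using the structure of \eqref{eqn:krylovapproxmat} to collapse every possible coloring to the single elementary differential $\J^{k-1}f$. A linear TW-tree of order $k$ is a chain whose elementary differential has the form $B_{k-1}\,B_{k-2}\cdots B_1\,f$, where each factor $B_j\in\{\J,\A\}$ is determined by the color of the corresponding non-root node. There are $2^{k-1}$ possible colorings, and the claim is that they all produce the same vector.

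The base case $k=1$ is trivial (elementary differential $f$), and $k=2$ is Lemma \ref{lem:ordercondsat} with $i=1$. For the inductive step, assume every chain of order $k-1$ yields $\J^{k-2}f$, and append one more node labeled $B_{k-1}$. If $B_{k-1}=\J$, the conclusion $\J^{k-1}f$ is immediate; if $B_{k-1}=\A$, I need the identity $\A\,\J^{k-2}f=\J^{k-1}f$. The key ingredient is that biorthogonality \eqref{eqn:biortho-id} makes $\V_m\,\W_m^T$ a (possibly oblique) projector acting as the identity on $\mathrm{span}(\V_m)=\K_m(\J,f)$, since $\V_m\,\W_m^T\,\V_m\,c=\V_m\,c$. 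Provided $k-1\leq m-1$, both $\J^{k-2}f$ and $\J^{k-1}f$ lie in $\K_m(\J,f)$, and consequently
\[
\A\,\J^{k-2}f \;=\; \V_m\,\W_m^T\,\J\,\V_m\,\W_m^T\,\J^{k-2}f \;=\; \V_m\,\W_m^T\,\J^{k-1}f \;=\; \J^{k-1}f,
\]
closing the induction for every $k\le M$ with $M\le m-1$.

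The main obstacle is exactly this order bound: once $k-1$ exceeds $m-1$, the vector $\J^{k-1}f$ may escape $\K_m(\J,f)$ (the boundary term $\theta_{m+1}\,v_{m+1}\,e_m^T$ in \eqref{eqn:biortho-jv} switches on), the projector $\V_m\,\W_m^T$ no longer fixes it, and the crucial equality $\A\,\J^{k-2}f=\J^{k-1}f$ can fail. This is the same threshold that limits Lemma \ref{lem:ordercondsat}, so the new oblique Jacobian approximation admits precisely the same TW-to-TK recoloring of linear subtrees as the orthogonal projection used in the original ROK construction, which is exactly what the downstream order-condition theory of \cite{Sandu_2014_ROK} relies on.
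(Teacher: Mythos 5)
Your argument is correct and is essentially the paper's own: the paper defers to \cite[Lemma 3.2]{Sandu_2014_ROK} together with repeated application of Lemma \ref{lem:ordercondsat}, and your induction on the chain length---with the oblique projector $\V_m\,\W_m^T$ acting as the identity on $\K_m(\J,f)$ driving the step $\A\,\J^{k-2}f=\J^{k-1}f$---is exactly that argument written out explicitly. One minor remark: your working hypothesis $k-1\le m-1$ (i.e.\ $k\le m$) is the correct threshold, so the closing restriction to $M\le m-1$ is conservative by one order.
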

\begin{proof}
The proof is identical to that given in \cite[Lemma 3.2]{Sandu_2014_ROK}, making use of repeated application of Lemma \ref{lem:ordercondsat}.
\qed
\end{proof}

\begin{corollary}
BOROK methods have identical order conditions as ROK methods. Consequently, BOROK schemes can use directly  previously derived sets of coefficients $\mathbf{\alpha}$, $\mathbf{\gamma}$, and $b$ for ROK methods, such as those from \cite{Sandu_2014_ROK, Wu_2016_ROK4E}.
\end{corollary}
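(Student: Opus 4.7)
The plan is to show that the machinery used to derive order conditions for ROK methods in \cite{Sandu_2014_ROK} transfers verbatim to BOROK, once the preceding recoloring lemma is in hand. First, I would point out that BOROK methods, by definition, are simply ROW methods \eqref{eqn:ros-general-form} with a specific choice of approximate Jacobian $\A$ given by \eqref{eqn:krylovapproxmat}. Consequently, the starting point for the order condition analysis — namely the B-series expansion of the numerical solution indexed by TW-trees (two-colored Butcher trees distinguishing nodes whose outgoing edge represents $\J$ from those representing $\A$) — is identical to that used for Rosenbrock-W methods, and in particular identical to the setup used for ROK methods.

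Next, I would observe that the gap between TW-tree order conditions and TK-tree order conditions is bridged entirely by identifying which pairs of differently colored TW-trees produce the \emph{same} elementary differential and can hence be merged. In \cite{Sandu_2014_ROK}, this merging is accomplished for linear subtrees by the relation $\A^i f = \J^i f$ for $0 \leq i \leq m-1$, and the full collapse of TW-trees to TK-trees follows by repeated local application to the linear chains attached to each meagre/fat node. Since the preceding Lemma establishes exactly the same recoloring result for the biorthogonal approximation \eqref{eqn:krylovapproxmat}, I would invoke \cite[Lemma 3.2 and Definition 3.3]{Sandu_2014_ROK} to conclude that the resulting set of TK-trees, and therefore the resulting order conditions on $\alpha_{i,j}$, $\gamma_{i,j}$, $b_i$, are identical to the ROK ones.

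Finally, I would close by remarking that since the order conditions depend only on $\alpha$, $\gamma$, $b$ and not on the particular projector used to build $\A$, any coefficient set that satisfies the ROK order conditions (such as those tabulated in \cite{Sandu_2014_ROK, Wu_2016_ROK4E}) automatically satisfies the BOROK order conditions of the same order. The only nontrivial step in this proof is the recoloring step, which has already been dispatched by the preceding lemma; the remainder is a bookkeeping argument that the B-series framework and tree-merging reduction depend solely on $\A^i f = \J^i f$, a property that holds identically in both the orthogonal (ROK) and oblique (BOROK) projection cases.
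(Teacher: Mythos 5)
Your proposal is correct and follows the same route the paper takes: the corollary is presented there as an immediate consequence of the recoloring lemma (which itself rests on $\A^i f = \J^i f$ from Lemma~\ref{lem:ordercondsat} and defers to \cite[Lemma 3.2]{Sandu_2014_ROK}), and your write-up simply makes that chain of reasoning explicit. No gaps.
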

%
 
\section{Basis size adaptivity for BOROK methods using stage residuals}
\label{sec:BOROKadapt}

The numerical stability of K-methods depends on the dimension of the Krylov space on which the Jacobian is projected. In particular, for $m \ge N$, the projected Jacobian equals the exact Jacobian, the function $\varphi(z) = 1/(1-z)$ is evaluated exactly, and the unconditional stability of the base Rosenbrock scheme is recovered. The stability study is more complex in the practical situation where $m \ll N$.

It was shown that the numerical stability of ROK methods can be greatly enhanced by enforcing a convergence condition on the residual of the linear systems in each stage equation \cite{Tranquilli2019}. Here we extend this approach to BOROK methods by first expressing the stage residuals in terms of the new approximate Jacobian and the biorthogonal relations \eqref{eqn:biorthoprop}, then modifying the Lanczos biorthogonalization procedure (Algorithm \ref{alg:lanczos_biorth}) to incorporate convergence checks.

\begin{theorem}[Stage linear system residuals for BOROK methods]
  \label{thm:stgresidual}
  The residual of the linear system at the \emph{i}th stage of an \emph{s}-stage biorthogonal ROK method is:
\begin{equation}
\label{eqn:stage-residual}
r_{m;i} = - h^2\, \J\, \sum_{j=1}^i \gamma_{i,j}\, \left(F_j - \V_m \psi_j\right) - h \theta_{m+1}\,v_{m+1}\,e_m^T\, \sum_{j=1}^i \gamma_{i,j}\,\lambda_j.
\end{equation}
\end{theorem}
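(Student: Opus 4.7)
The plan is to interpret the stage residual $r_{m;i}$ as the residual obtained when the BOROK stage vector $k_i$ (computed with the Krylov approximation $\A$) is substituted into the \emph{exact} stage linear system, i.e.\ the one in which $\A$ is replaced by the true Jacobian $\J$. Setting $\gamma_{i,i} = \gamma$ and applying $\varphi(z)=1/(1-z)$ to $(\I_N - h\gamma\J)k_i = hF_i + h\J\sum_{j=1}^{i-1}\gamma_{i,j}k_j$, the quantity to evaluate is
\[
r_{m;i} \;=\; k_i \;-\; hF_i \;-\; h\,\J\,\sum_{j=1}^{i}\gamma_{i,j}\,k_j.
\]
So my first step is to write this down explicitly and to substitute the reduced-form expression $k_i = \V_m\lambda_i + h(F_i - \V_m\psi_i)$ from Theorem \ref{thm:borok-general-form}.

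Next, I will use the definition of $\lambda_i$ in the reduced linear system \eqref{eqn:redstageeq}, rewritten as
\[
\lambda_i \;=\; h\psi_i \;+\; h\,\Tb_m\,\sum_{j=1}^{i}\gamma_{i,j}\,\lambda_j,
\]
to obtain a closed-form expression for $\V_m\lambda_i$ involving $\V_m\Tb_m$. The crucial step is to eliminate the reduced matrix $\Tb_m$ in favour of the exact Jacobian $\J$ via the biorthogonal Arnoldi-type relation \eqref{eqn:biortho-jv}, i.e.\ $\V_m\Tb_m = \J\V_m - \theta_{m+1}\,v_{m+1}\,e_m^T$. Substituting this identity produces
\[
k_i \;=\; hF_i \;+\; h\,\J\,\V_m\sum_{j=1}^{i}\gamma_{i,j}\,\lambda_j \;-\; h\,\theta_{m+1}\,v_{m+1}\,e_m^T\,\sum_{j=1}^{i}\gamma_{i,j}\,\lambda_j,
\]
so that the terms $hF_i$ cancel when this is plugged into $r_{m;i}$, leaving $h\J\sum_{j=1}^i\gamma_{i,j}(\V_m\lambda_j - k_j)$ plus the boundary term.

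Finally, I use the reduced-form identity $\V_m\lambda_j - k_j = -h(F_j - \V_m\psi_j)$, which follows immediately from \eqref{eqn:redstagek}, to rewrite the first summand as $-h^2\J\sum_{j=1}^i\gamma_{i,j}(F_j - \V_m\psi_j)$. Combining this with the boundary term inherited from \eqref{eqn:biortho-jv} yields exactly \eqref{eqn:stage-residual}. The main obstacle I anticipate is purely bookkeeping: I must be careful to include the diagonal term $j=i$ (with $\gamma_{i,i}=\gamma$) when converting the explicit sum $\sum_{j=1}^{i-1}$ in the stage equation into the closed sum $\sum_{j=1}^{i}$, since this is what allows the convenient cancellation, and I need to track the two different error sources (the orthogonal residuals $F_j - \V_m\psi_j$ produced by the projection of the RHS, and the Arnoldi boundary term $\theta_{m+1}v_{m+1}e_m^T$) separately so that neither collapses prematurely.
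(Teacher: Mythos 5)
Your proposal is correct and follows essentially the same route as the paper's proof: both define $r_{m;i}$ as the defect of the reduced-form stage vector in the exact (full-Jacobian) stage linear system, substitute $k_i = \V_m\lambda_i + h(F_i - \V_m\psi_i)$, invoke the Lanczos relation $\J\V_m = \V_m\Tb_m + \theta_{m+1}v_{m+1}e_m^T$, and cancel via the reduced stage equation. Your bookkeeping is slightly tidier (you fold the reduced stage equation into a closed form for $k_i$ up front rather than expanding everything and identifying the vanishing bracket afterward), but no new idea or lemma is involved.
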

\begin{proof}
  First, we recall the full space form of the stage equation \eqref{eqn:ros-stage-eq}, compensated with a residual, $r_{m;i}$, due to the inexact solve:
  \begin{equation*}
    \left(\I_N - h\gamma\,\J\right) k_i = h F_i + h\,\J \sum_{j=1}^{i-1}\gamma_{i,j}k_j + r_{m;i}.
  \end{equation*}
  Now, we isolate $r_{m;i}$ and substitute the $k_i$ formula from \eqref{eqn:redstagek}
  \[
    r_{m;i} = \left(\I_N - h\gamma\,\J\right)\left(\V_m\,\lambda_i + h F_i - h\V_m\,\psi_i\right) - h F_i - h\,\J \sum_{j=1}^{i-1} \gamma_{i,j} \left(\V_m\,\lambda_j + h F_j - h\V_m\,\psi_j\right).
  \]
  Expanding and canceling the $h F_i$ terms leads to:
  \begin{equation*}
  \begin{split}
    r_{m;i} &= \V_m\,\lambda_i - h\V_m\,\psi_i - h\gamma\,\J\,\V_m\,\lambda_i - h^2\gamma\,\J F_i + h^2\gamma\,\J\,\V_m\,\psi_i \\
 &\quad   - h^2\,\J \sum_{j=1}^{i-1} \gamma_{i,j}\, F_j - h\,\J\,\V_m \sum_{j=1}^{i-1} \gamma_{i,j}\left(\lambda_j - h\psi_j\right).
 \end{split}
  \end{equation*}
  Equation \eqref{eqn:biortho-jv}
  \[
    \J\,\V_m = \V_m\,\Tb_m + \theta_{m+1}\,v_{m+1}\,e_m^T
  \]
  can then be used to further expand the $\lambda$ terms to:
  \begin{multline*}
    r_{m;i} = \V_m\,\lambda_i - h\V_m\,\psi_i - h\gamma\,\V_m\,\Tb_m\,\lambda_i - h\gamma\,\theta_{m+1}\,v_{m+1}\,e_m^T\,\lambda_i - h^2\gamma\,\J F_i + h^2\gamma\,\J\,\V_m\,\psi_i \\
    - h^2\,\J \sum_{j=1}^{i-1} \gamma_{i,j}\, F_j - h\V_m\,\Tb_m \sum_{j=1}^{i-1} \gamma_{i,j}\, \lambda_j - h\,\theta_{m+1}\,v_{m+1}\,e_m^T \sum_{j=1}^{i-1} \gamma_{i,j}\, \lambda_j + h^2\,\J\,\V_m \sum_{j=1}^{i-1} \gamma_{i,j}\, \psi_j.
  \end{multline*}
  Rearanging and factoring leads to
  \begin{multline*}
    r_{m;i} = \V_m\underbrace{\left[\left(\I_m - h\gamma\,\Tb_m\right)\lambda_i - h\psi_i - h\Tb_m \sum_{j=1}^{i-1} \gamma_{i,j}\, \lambda_j\right]}_{=\,0} - h\gamma\,\theta_{m+1}\,v_{m+1}\,e_m^T\,\lambda_i - h^2\gamma\,\J F_i + h^2\gamma\,\J\,\V_m\,\psi_i \\
    - h^2\,\J \sum_{j=1}^{i-1} \gamma_{i,j}\, F_j - h\,\theta_{m+1}\,v_{m+1}\,e_m^T \sum_{j=1}^{i-1} \gamma_{i,j}\, \lambda_j + h^2\,\J\,\V_m \sum_{j=1}^{i-1} \gamma_{i,j}\, \psi_j,
  \end{multline*}
  where we recognize the bracketed terms as satisfying the reduced stage equation \eqref{eqn:redstageeq} from definition \ref{def:borok-reduced-form}. Next, we collect all terms into sums:
  \[
    r_{m;i} = h^2\,\J\,\V_m \sum_{j=1}^{i} \gamma_{i,j}\,\psi_j - h^2\,\J \sum_{j=1}^{i} \gamma_{i,j}\, F_j - h\,\theta_{m+1}\,v_{m+1}\,e_m^T \sum_{j=1}^i \gamma_{i,j}\, \lambda_j.
  \]
  Finally, regrouping the terms gives the desired result.
  \qed
\end{proof}

\begin{corollary}[First stage residual of a BOROK method]
  \label{cor:stageoneres}
  The linear system residual for the first stage of a biorthogonal ROK method has the form:
  \[
    r_{m;1} = - h\gamma\,\theta_{m+1}\,v_{m+1}\,e_m^T\,\lambda_1,
  \]
  with the norm
  \[
    \left\|r_{m;1}\right\| = \left|h\gamma\,\theta_{m+1} e_m^T\,\lambda_1\right| \left\|v_{m+1}\right\|.
  \]
  Further, one can choose $\left\|v_{m+1}\right\| = 1$ in the Lanczos biorthogonalization procedure.
\end{corollary}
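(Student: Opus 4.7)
The plan is to specialize Theorem \ref{thm:stgresidual} to $i=1$ and verify that the first of the two terms in the stage residual drops out. For $i=1$ the two sums in \eqref{eqn:stage-residual} each collapse to a single $j=1$ summand with coefficient $\gamma_{1,1}=\gamma$, so everything reduces to showing
\[
  F_1 - \V_m\,\psi_1 \;=\; 0.
\]
Since $F_1 = f(y_n) = f$ and $\psi_1 = \W_m^T F_1$, this is the statement that the oblique projector $\V_m\W_m^T$ fixes the seed vector $f$.

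First I would recall that, by construction of the Lanczos biorthogonalization used here, both Krylov spaces are started from the same vector $f$; normalizing so that $v_1 = f/\|f\|$ and choosing $w_1$ with $(v_1,w_1)=1$ yields $f = \|f\|\,\V_m e_1$. Then using the biorthogonality relation \eqref{eqn:biortho-id}, $\W_m^T \V_m = \I_m$, I would compute $\V_m \W_m^T f = \|f\|\,\V_m\W_m^T\V_m e_1 = \|f\|\,\V_m e_1 = f$, so indeed $F_1 - \V_m\,\psi_1 = 0$. Substituting this into \eqref{eqn:stage-residual} immediately gives the advertised expression
\[
  r_{m;1} \;=\; -\,h\gamma\,\theta_{m+1}\,v_{m+1}\,e_m^T\,\lambda_1.
\]

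The norm identity then follows by pulling out the scalar factor $h\gamma\,\theta_{m+1}\,e_m^T\lambda_1$, and the unit-norm claim for $v_{m+1}$ is a direct consequence of lines 7 and 10 of Algorithm \ref{alg:lanczos_biorth}: since $\theta_{j+1}=\|\hat v_{j+1}\|$ and $v_{j+1}=\hat v_{j+1}/\theta_{j+1}$, every $v_j$ produced by the procedure has Euclidean norm one. There is no serious obstacle here; the only subtlety worth stating explicitly is the identification $f = \|f\|\,\V_m e_1$ that makes $f$ exactly reproducible by the oblique projector $\V_m\W_m^T$, which is precisely what eliminates the $\J$-dependent term and leaves a residual whose norm can be monitored cheaply (without touching $\J$) through the scalars $\theta_{m+1}$ and $e_m^T\lambda_1$ — exactly the quantity needed for the adaptive convergence check developed in Section \ref{sec:BOROKadapt}.
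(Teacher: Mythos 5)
Your proposal is correct and follows exactly the route the paper intends: the corollary is a direct specialization of Theorem \ref{thm:stgresidual} to $i=1$ (with the convention $\gamma_{1,1}=\gamma$), and your key observation that $F_1 - \V_m\psi_1 = 0$ because the oblique projector $\V_m\W_m^T$ fixes the seed vector $f$ is precisely the reason the $\J$-dependent term vanishes. The norm factorization and the unit-norm claim for $v_{m+1}$ via lines 7 and 10 of Algorithm \ref{alg:lanczos_biorth} are likewise correct.
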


The full stage residual \eqref{eqn:stage-residual} from Theorem \ref{thm:stgresidual} contains a full-space term requiring matrix-vector multiplication against the full Jacobian, as well as information from all previous stage function evaluations $F_j$ and reduced space solutions $\psi_j$. Thus, it would be very costly to use of the residuals from stages $2$ to $s$ in the construction of the Krylov bases. Instead, using Corollary \ref{cor:stageoneres}, we modify the Lanczos biorthogonalization procedure in Algorithm \ref{alg:lanczos_biorth} to test for convergence of only the first stage residual. This approach leads to Algorithm \ref{alg:lanczos_biorth_conv}.

\begin{algorithm}[ht]
\caption{Lanczos biorthogonalization with convergence test of the first residual.}
\label{alg:lanczos_biorth_conv}
\begin{algorithmic}[1]
   \State Choose two vectors $v_1$, $w_1$ such that $(v_1, w_1) = 1$.
   \State Set $\beta_1 = \delta_1 = 0$, $w_0 = v_0 = 0$
   \For{$j = 1,2,\dots$ until convergence}
     \State $\kappa_j = \left(Av_j, w_j\right)$
     \State $\hat{v}_{j+1} = A v_j - \kappa_j v_j - \beta_j v_{j-1}$
     \State $\hat{w}_{j+1} = A^T w_j - \kappa_j w_j - \theta_j w_{j-1}$
     \State $\theta_{j+1} = \left|\left(\hat{v}_{j+1}, \hat{v}_{j+1}\right)\right|^{1/2}$
     \State $\beta_{j+1} = \left(\hat{v}_{j+1}, \hat{w}_{j+1}\right) / \theta_{j+1}$
     \If{$j \geq 4$}
       \State $\lambda_1 = \left(\I_j - h\gamma\,\Tb_j\right)^{-1} \left(h \W_j^T F_i\right)$
       \State $\left\|r_{1;j}\right\| = \left|h\gamma\,\theta_{j+1} e_j^T\,\lambda_1\right|$
       \If{$\left\|r_{1;j}\right\| \leq \text{TOL}$}
         \State \textbf{break}
       \EndIf
     \EndIf
     \State $w_{j+1} = \hat{w}_{j+1} / \beta_{j+1}$
     \State $v_{j+1} = \hat{v}_{j+1} / \theta_{j+1}$
   \EndFor
\end{algorithmic}
\end{algorithm}

\section{Adaptive subspace extensions to increase BOROK stability}
\label{sec:BOROKext}

A potential approach to increasing numerical stability involves extending the reduced spaces with vectors not directly generated from the biorthogonalization procedure, and thus, not necessarily part of the Krylov bases. For ROK methods, this idea was shown to provide a dramatic increase in stability \cite{Tranquilli2019}. In this section, we first develop a general procedure for adding arbitrary vectors to the biorthogonal subspaces, then apply it to extend our subspaces with the linear system right-hand sides for stages 2 to $s$ of the BOROK integrator \eqref{eqn:borok-reduced-form}.

\subsection{Biorthogonal subspace extension}

We seek to introduce arbitrary vectors $\mathbf{a} = [a_1, ..., a_r] \in \R^{N\times r}$ into our biorthogonal bases, such that the augmented basis matrices $\V_{m+r}$ and $\W_{m+r}$ and the reduced space matrix $\Tb_{m+r}$  retain the biorthogonal properties of equation \eqref{eqn:biorthoprop}. We have:
\begin{align*}
  \V_{m+r} & = \left[ \V_m, \mathbf{v}_a\right], \\
  \W_{m+r} & = \left[ \W_m, \mathbf{w}_a\right], \\
  \V_{m+r}^T \W_{m+r} & = \I_{m+r},
\end{align*}
where $\mathbf{v}_a,\mathbf{w}_a \in \R^{N\times r}$ are computed such that our vectors $a_i$ are completely contained in our subspace:
\[
  \left(\I_N - \V_{m+r}\,\W_{m+r}^T\right) \mathbf{a} = \left(\I_N - \V_m \W_m^T - \mathbf{v}_a \mathbf{w}_a^T\right)\mathbf{a} = 0_{N\times r}.
\]
We can extract all the conditions which $\mathbf{v}_a,\mathbf{w}_a$  must satisfy:
\begin{subequations}
\label{eqn:extconds}
\begin{align}
  \W_m^T\, \mathbf{v}_a & = 0_{m\times r}, \label{eqn:extconds1} \\
  \V_m^T \mathbf{w}_a & = 0_{m\times r}, \\
  \mathbf{v}_a^T \mathbf{w}_a & = \I_r, \\
  \mathbf{a}^T  \mathbf{w}_a & = \mathbf{D}_a, \\
  \mathbf{v}_a \mathbf{D}_a & = \mathbf{a} - V_m W_m^T\, \mathbf{a} \label{eqn:extconds5}
\end{align}
\end{subequations}
where $\mathbf{D}_a \in \R^{r\times r}$ is a nonzero diagonal matrix.
Observe that conditions \eqref{eqn:extconds1} and \eqref{eqn:extconds5} are redundant, as left multiplying condition \eqref{eqn:extconds5} by $\W_m^T$ gives back \eqref{eqn:extconds1}:
\[
  \W_m^T\, \mathbf{v}_a \mathbf{D}_a = \W_m^T\, \mathbf{a} - \left(\W_m^T \,\V_m\right) \W_m^T\, \mathbf{a} = \W_m^T\, \mathbf{a} - \W_m^T\, \mathbf{a} = 0_{m\times r}.
\]
If we select the diagonal matrix $\mathbf{D}_a = \alpha \I_r$, then given condition \eqref{eqn:extconds5} to define $\mathbf{v}_a \mathbf{D}_a = \alpha \mathbf{v}_a$ and a selection for the arbitrary constant $\alpha$, the remaining three conditions define a set of least-squares problems to solve for each vector in $\mathbf{w}_a$:
\[
  \left[\V_m, \alpha \mathbf{v}_a, \mathbf{a}\right]^T \mathbf{w}_a = \left[\mathbf{0}_m, \alpha \I_r, \alpha \I_r\right]^T.
\]
Finally, we must compute the reduced space projection of the Jacobian, $\Tb_{m+r}$, again maintaining the biorthogonal properties from equation \eqref{eqn:biortho-tmat}:
\[
  \Tb_{m+r} = \W_{m+r}^T\, \J\, \V_{m+r}.
\]
Expanding into subblocks we have:
\begin{align*}
  \Tb_{m+r} & = \left[\begin{array}{cc}
                  \Tb_m & \mathbf{t}_v \\
                  \mathbf{t}_w^T & \mathbf{t}_{vw}
                \end{array}\right] \\[1em]
          & = \left[\W_m, \mathbf{w}_a\right]^T \J \left[\V_m, \mathbf{v}_a\right] \\[1em]
          & = \left[\begin{array}{cc}
                \W_m^T \J \V_m, & \W_m^T \J \mathbf{v}_a \\
                \mathbf{w}_a^T \J \V_m, & \mathbf{w}_a^T \J \mathbf{v}_a
              \end{array}\right].
\end{align*}
The complete procedure is summarized in Algorithm \ref{alg:extgenalgorithm}.

\begin{algorithm}[ht]
\caption{Arbitrary extension of the subspace defined by a pair of biorthogonal bases.}
\label{alg:extgenalgorithm}
\begin{algorithmic}[1]
  \State Select scalar $\alpha$.
  \State Compute the projection of $\mathbf{a}$ into the existing subspace: $\mathbf{v}_a = \frac{1}{\alpha}\left(\I_N - \V_m \W_m^T\right) \mathbf{a}$.
  \State Solve a block least-squares problem for $\mathbf{w}_a$: $\left[\V_m, \alpha \mathbf{v}_a, a\right]^T \mathbf{w}_a = \left[0_m, \alpha \I_r, \alpha \I_r\right]^T$ for $\mathbf{w}_a$.
  \State Construct augmented basis matrices:
         \begin{itemize}
           \item $\V_{m+r} = \left[\V_m, \mathbf{v}_a\right]$,
           \item $\W_{m+r} = \left[\W_m, \mathbf{w}_a\right]$.
         \end{itemize}
  \State Compute the augmentation vectors for $T_{m+r}$:
         \begin{itemize}
           \item $\mathbf{t}_v = \W_m^T\,  \J \mathbf{v}_a$,
           \item $\mathbf{t}_w = \V_m^T\,  \J^T \mathbf{w}_a$,
           \item $\mathbf{t}_{vw} = \mathbf{w}_a^T\,  \J \mathbf{v}_a$.
         \end{itemize}
  \State Construct reduced space projection matrix $\Tb_{m+r} = \left[\begin{array}{cc} \Tb_m, & \mathbf{t}_v \\ \mathbf{t}_w^T, & \mathbf{t}_{vw} \end{array}\right]$.
\end{algorithmic}
\end{algorithm}

Before we can make use of this subspace extension algorithm in the BOROK method, we need analogs to \eqref{eqn:biortho-jv} and \eqref{eqn:biortho-jtw}, the remaining two equations in the biorthogonal relations \eqref{eqn:biorthoprop}. 

\begin{lemma}[Biorthogonal properties with extended subspace.]
When the subspace extension from Algorithm \ref{alg:extgenalgorithm} is used, the biorthogonal relations \eqref{eqn:biorthoprop} take the form:
 \begin{subequations}
  \label{eqn:biorthopropext}
  \begin{align}
   \V_{m+r}^T \W_{m+r} & = \I_{m+r}, \label{eqn:bopropext_orth} \\
   \Tb_{m+r} & = \W_{m+r}  \J \V_{m+r}, \label{eqn:bopropext_t} \\
   \J \V_{m+r} & = \V_{m+r} \Tb_{m+r} + \left(\I_N - \V_{m+r} \W_{m+r}^T\right) \J \left(v_m e_m^T + \mathbf{v}_a\sum_{i=1}^r e_{m+i}^T\right), \label{eqn:bopropext_jv} \\
   \J^T \W_{m+r} & = \W_{m+r} \Tb_{m+r}^T + \left(\I_N - \W_{m+r} \V_{m+r}^T\right) \J^T \left(w_m e_m^T + \mathbf{w}_a\sum_{i=1}^r e_{m+i}^T\right). \label{eqn:bopropext_jtw}
  \end{align}
 \end{subequations}
\end{lemma}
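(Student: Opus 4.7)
The plan is to establish the four claims in \eqref{eqn:biorthopropext} one at a time, leveraging the construction in \eqref{eqn:extconds} and the original Lanczos relations \eqref{eqn:biorthoprop}. The first two identities are essentially by construction. Writing $\V_{m+r}^T \W_{m+r}$ in $2 \times 2$ block form and substituting $\V_m^T \W_m = \I_m$, $\W_m^T \mathbf{v}_a = 0$, $\V_m^T \mathbf{w}_a = 0$, and $\mathbf{v}_a^T \mathbf{w}_a = \I_r$ from \eqref{eqn:extconds} yields \eqref{eqn:bopropext_orth}. Identity \eqref{eqn:bopropext_t} is just the definition of $\Tb_{m+r}$ from Algorithm \ref{alg:extgenalgorithm}; one reads off the four blocks directly.

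For \eqref{eqn:bopropext_jv}, I first use \eqref{eqn:bopropext_t} to rewrite
\[
\J \V_{m+r} - \V_{m+r} \Tb_{m+r} = \bigl(\I_N - \V_{m+r}\, \W_{m+r}^T\bigr)\, \J\, \V_{m+r}.
\]
The key observation is that $\V_{m+r}\, \W_{m+r}^T$ is an oblique projector onto $\text{range}(\V_{m+r})$ (using \eqref{eqn:bopropext_orth}), and therefore acts as the identity on $\text{range}(\V_m) \subseteq \text{range}(\V_{m+r})$. From the unextended relation \eqref{eqn:biortho-jv}, the first $m-1$ columns of $\J\,\V_m$ lie in $\text{range}(\V_m)$, so the corresponding columns of $(\I_N - \V_{m+r}\, \W_{m+r}^T)\,\J\,\V_{m+r}$ vanish. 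The $m$th column contributes $(\I_N - \V_{m+r}\W_{m+r}^T)\,\J\, v_m$, while columns $m+1,\dots,m+r$ contribute $(\I_N - \V_{m+r}\W_{m+r}^T)\,\J\,(\mathbf{v}_a)_i$ for $i = 1,\dots,r$. Collected together, these nonzero columns reproduce exactly the right-hand side $(\I_N - \V_{m+r}\W_{m+r}^T)\,\J\,\bigl(v_m e_m^T + \mathbf{v}_a \sum_{i=1}^r e_{m+i}^T\bigr)$ of \eqref{eqn:bopropext_jv}. Relation \eqref{eqn:bopropext_jtw} follows by the symmetric argument applied to $\J^T$, using \eqref{eqn:biortho-jtw} in place of \eqref{eqn:biortho-jv} and interchanging the roles of $(\V_m, \mathbf{v}_a)$ and $(\W_m, \mathbf{w}_a)$.

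The main obstacle is the bookkeeping needed to see why the extended residual has a qualitatively different form from the unextended Lanczos case: rather than a single rank-one term $\theta_{m+1} v_{m+1} e_m^T$, the residual is now the oblique projection of $\J$ applied to every \emph{boundary} column of $\V_{m+r}$, namely the last Lanczos vector $v_m$ together with all $r$ columns of $\mathbf{v}_a$. The crucial structural point that makes this simplification work is the tridiagonality of $\Tb_m$, which guarantees $\J v_j \in \text{range}(\V_m)$ for $j < m$ and thus ensures the interior columns are annihilated by $\I_N - \V_{m+r}\W_{m+r}^T$; once this is recognized, the remaining verification is a straightforward column-by-column matching.
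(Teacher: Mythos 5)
Your proof is correct and follows essentially the same route as the paper's: the first two identities by construction, and the last two by computing $\J \V_{m+r} - \V_{m+r}\Tb_{m+r}$ blockwise and observing that the original relation \eqref{eqn:biortho-jv} annihilates all but the boundary columns under the oblique projector $\I_N - \V_{m+r}\W_{m+r}^T$ (the paper expands the blocks directly rather than factoring the projector out first, but the computation is the same). One small quibble: what guarantees $\J v_j \in \mathrm{range}(\V_m)$ for $j<m$ is the rank-one form of the residual term $\theta_{m+1}v_{m+1}e_m^T$ in \eqref{eqn:biortho-jv}, not the tridiagonality of $\Tb_m$ per se --- the same conclusion would hold for an Arnoldi-type Hessenberg relation.
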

\begin{proof}
 From Algorithm \ref{alg:extgenalgorithm}, we have both properties \eqref{eqn:bopropext_orth} and \eqref{eqn:bopropext_t} by construction. For property \eqref{eqn:bopropext_jv}, we expand using the original biorthogonal properties:
 \begin{align*}
  \J \V_{m+r} - \V_{m+r} \Tb_{m+r} & = \left[\V_m \Tb_m + \left(\I_N - \V_m \W_m^T\right) \J v_m e_m^T, \J \mathbf{v}_a\right] - \left[\V_m \Tb_m + \mathbf{v}_a \mathbf{w}_a^T \J v_m e_m^T, \V_{m+r} \W_{m+r}^T\,  \J \mathbf{v}_a\right] \\
  & = \left[\left(\I_N - \V_m \W_m^T - \mathbf{v}_a \mathbf{w}_a^T\right) \J v_m e_m^T, \left(\I_N - \V_{m+r} \W_{m+r}^T\right) \J \mathbf{v}_a\right].
 \end{align*}
 Substituting $\V_{m+r} \W_{m+r}^T = \V_m \W_m^T + \mathbf{v}_a \mathbf{w}_a^T$, and making use of canonical basis vectors $e_i \in \R^n$, completes the proof for property \eqref{eqn:bopropext_jv}. Similarly, for property \eqref{eqn:bopropext_jtw} we have:
 \begin{align*}
  \J^T \W_{m+r} - \W_{m+r} \Tb_{m+r}^T & = \left[\W_m \Tb_m^T + \left(\I_N - \W_m \V_m^T\right) \J^T w_m e_m^T, \J^T \mathbf{w}_a\right] \\
  & \quad - \left[\W_m \Tb_m^T + \mathbf{w}_a \mathbf{v}_a^T \J^T w_m e_m^T, \W_{m+r} \V_{m+r}^T\,  \J^T \mathbf{w}_a\right] \\
  & = \left[\left(\I_N - \W_m \V_m^T - \mathbf{w}_a \mathbf{v}_a^T\right) \J^T w_m e_m^T, \left(\I_N - \W_{m+r} \V_{m+r}^T\right) \J^T \mathbf{w}_a\right].
 \end{align*}
 Again, rewriting with canonical basis vectors completes the proof.
 \qed
\end{proof}

With the above general procedure for extending  biorthogonal bases at hand, we now  have a variety of choices for vectors to add. One possibility is to consider basis recycling \cite{Parks2006,Ye2008}, where we save some basis vectors from previous timesteps, allowing us to minimize the number of new basis vectors computed at each step. This basis extension procedure also allows us to implement type-2 ROK methods \cite[Section 3.2]{Sandu_2014_ROK}, which make use of higher order derivative information from the ODE derivative function to satisfy the additional order conditions for ROS methods when the order is greater than the subspace dimension $m$. A very simple possibility comes from our examination of the BOROK stage residual in Theorem \ref{thm:stgresidual}: the most significant contributors to the residual that are not naturally contained in the Krylov subspaces are the ODE right hand side function evaluations $F_j$ from each stage.

\subsection{Stage residuals with subspace extension}
\label{subsec:BOROKext}

Now we consider a particular application of Algorithm \ref{alg:extgenalgorithm} to the BOROK method. From Theorem \ref{thm:stgresidual} we see that, for stages 2 to $s$, the residual contains the terms $\left(F_j - V_m\,\psi_j\right)$ that represent those components of the ODE right hand side that are not captured by the Krylov subspaces. Thus, if we augment the Krylov basis matrices to include the $F_j$, we expect to produce smaller residuals at these stages, and potentially improve the total method stability.

\begin{definition}[Basis matrices with subspace extension]
 At the \emph{i}th stage of a BOROK method, we define the basis matrices $\V_{m;i}$, $\W_{m;i}$ and the reduced space matrix $\Tb_{m;i}$ as, at $i = 1$:
 \begin{align}
  \V_{m;1} & = \V_m, \\
  \W_{m;1} & = \W_m, \\
  \Tb_{m;1} & = \Tb_m.
 \end{align}
 And, for stages $2$--$s$, $\V_{m;i}$, $\W_{m;i}$ and $\Tb_{m;i}$ are defined by extending the previous stage's matrices $\V_{m;i-1}$, $\W_{m;i-1}$, and $\Tb_{m;i-1}$ with $F_i$ from \eqref{eqn:redstagef} via algorithm \ref{alg:extgenalgorithm}.
\end{definition}

\begin{definition}[BOROK reduced form stage equations with subspace extension]
 We define the BOROK reduced form stage equations in terms of these extended matrices as
 \begin{subequations}
 \label{eqn:redstageeqext}
  \begin{align}
   \psi_i & = \W_{m;i}^T F_i, \label{eqn:redstageeqext1} \\
   \left(\I_m - h\gamma\,\Tb_{m;i}\right) \widehat{\lambda}_i & = h\, \psi_i + h \Tb_{m;i} \sum_{j=1}^{i-1}\gamma_{i,j}\,\widehat{\lambda}_j, \label{eqn:redstageeqext2} \\
   k_j & = \V_{m;i}\,\widehat{\lambda}_j + h\left(F_j - \V_{m;i}\,\psi_j\right). \label{eqn:redstageeqext3}
  \end{align}
 \end{subequations}
\end{definition}

\begin{theorem}[Stage linear system residual of a BOROK method with subspace extension]
The residual of the \emph{i}th stage linear system of an \emph{s}-stage biorthogonal ROK method with subspace extension is:
 \begin{equation}
  r_{m;i} = -h\left(\I_N - \V_{m;i}\,\W_{m;i}^T\right) \J \left(v_m e_m^T + \sum_{k=1}^{i-1}v_{m+k}e_{m+k}^T\right) \sum_{j=1}^i \gamma_{i,j}\,\widehat{\lambda}_j,
 \end{equation}
 where $\widehat{\lambda}_j \in \R^{m+i}$ is defined by
 \begin{equation}
  \widehat{\lambda}_j = \left\{\begin{array}{cc} \left[\lambda_j^T, 0, ..., 0\right]^T, & j < i, \\ \lambda_j, & j = i. \end{array} \right.
 \end{equation}
\end{theorem}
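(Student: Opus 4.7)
The plan is to follow the same scaffolding as the proof of Theorem~\ref{thm:stgresidual}, but replace the original biorthogonal identities \eqref{eqn:biorthoprop} with their extended counterparts \eqref{eqn:biorthopropext}, and exploit the fact that, by construction of Algorithm~\ref{alg:extgenalgorithm}, each right-hand side $F_j$ with $j\le i$ is exactly reproduced by the oblique projector $\V_{m;i}\W_{m;i}^T$.

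First I would start from the full-space stage equation written with an inexact-solve residual:
\[
  (\I_N - h\gamma\,\J)\,k_i \;=\; hF_i + h\,\J\sum_{j=1}^{i-1}\gamma_{i,j}\,k_j + r_{m;i},
\]
isolate $r_{m;i}$, and substitute the extended decomposition \eqref{eqn:redstageeqext3} for every $k_j$, viewing $\widehat{\lambda}_j$ as the zero-padded lift of the stage-$j$ reduced solution into the stage-$i$ basis. Before expanding, I would establish the preliminary identity $F_j = \V_{m;i}\W_{m;i}^T F_j$ for all $j\le i$: at stage $j$ the extension conditions \eqref{eqn:extconds} force $F_j\in\mathrm{span}(\V_{m;j})$, and subsequent extensions only enlarge the subspace, so inductively $F_j\in\mathrm{span}(\V_{m;i})$ and the oblique projector acts as the identity on it. This reduction collapses the $h(F_j-\V_{m;i}\widehat{\psi}_j)$ corrections and is what allows the final answer to contain only a single leakage term (unlike the $-h^2\J(F_j-\V_m\psi_j)$ term of Theorem~\ref{thm:stgresidual}).

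Next, I would apply the extended Arnoldi-type relation \eqref{eqn:bopropext_jv},
\[
  \J\,\V_{m;i} \;=\; \V_{m;i}\,\Tb_{m;i} + \bigl(\I_N - \V_{m;i}\W_{m;i}^T\bigr)\J\Bigl(v_m e_m^T + \sum_{k=1}^{i-1}v_{m+k}\,e_{m+k}^T\Bigr),
\]
everywhere $\J\V_{m;i}$ appears. The terms that remain inside $\mathrm{span}(\V_{m;i})$ can then be factored as $\V_{m;i}\bigl[(\I-h\gamma\Tb_{m;i})\widehat{\lambda}_i - h\psi_i - h\Tb_{m;i}\sum_{j<i}\gamma_{i,j}\widehat{\lambda}_j\bigr]$, which vanishes identically by the reduced stage equation \eqref{eqn:redstageeqext2}, exactly as in the original Theorem~\ref{thm:stgresidual}. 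The surviving pieces are all coupled through the leakage operator from \eqref{eqn:bopropext_jv}, so a final regrouping and factoring of $\sum_{j=1}^i \gamma_{i,j}\widehat{\lambda}_j$ on the right gives the stated expression.

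The main obstacle is the bookkeeping across stages of different dimensions: $\lambda_j$, $\psi_j$, and $\V_{m;j}$ live in spaces whose size grows with the stage index, and one must be careful that the zero-padded $\widehat{\lambda}_j$ and the $\V_{m;i}$-based expressions agree with the native stage-$j$ objects after the leakage terms from \eqref{eqn:bopropext_jv} are accounted for. In particular, when $\J\V_{m;i}$ is multiplied against a padded $\widehat{\lambda}_j$ ($j<i$), only the first $m+j-1$ columns of the leakage matrix contribute; checking that this truncation is consistent with the single sum $\sum_{k=1}^{i-1}v_{m+k}e_{m+k}^T$ in the final formula is the delicate step. Everything else reduces to algebra that parallels the original proof.
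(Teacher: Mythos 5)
Your proposal is correct and follows essentially the same route as the paper's own proof: isolate the residual in the full-space stage equation, use the fact that the extension forces $F_j - \V_{m;i}\,\W_{m;i}^T F_j = 0$ so that $k_j = \V_{m;i}\,\widehat{\lambda}_j$, substitute the extended relation \eqref{eqn:bopropext_jv} for $\J\,\V_{m;i}$, and cancel the term satisfying the reduced stage equation \eqref{eqn:redstageeqext2}. Your explicit inductive justification that $F_j$ lies in $\mathrm{span}(\V_{m;i})$ for all $j \le i$, and your attention to the dimensional bookkeeping of the zero-padded $\widehat{\lambda}_j$, are points the paper states only implicitly, but they do not change the argument.
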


\begin{proof}
 Following the proof of Theorem \ref{thm:stgresidual}, we start with the full space stage equation \eqref{eqn:ros-stage-eq} with a residual term $r_{m;i}$:
 \begin{equation*}
  \left(\I_N - h\gamma\,\J\right) k_i = h F_i + h \J \sum_{j=1}^{i-1} \gamma_{i,j} k_j + r_{m;i},
 \end{equation*}
 From the reduced space form \eqref{eqn:redstageeqext}, substituting \eqref{eqn:redstageeqext1} into \eqref{eqn:redstageeqext3} results in the term $F_j - \V_{m;i}\,\W_{m;i}^T F_j = 0$ by the extension of the subspace with $F_j$, so equation \eqref{eqn:redstageeqext3} becomes:
 \[
  k_j = \V_{m;i}\,\widehat{\lambda}_j.
 \]
 Now, we can isolate the residual term in the full space form and begin substituting in the reduced space equations:
 \begin{align*}
  r_{m;i} & = k_i - h F_i - h \J \sum_{j=1}^i \gamma_{i,j} k_j \\
          & = \V_{m;i} \widehat{\lambda}_i - h \V_{m;i}\,\psi_i - h\,\J \V_{m;i} \sum_{j=1}^i \gamma_{i,j} \widehat{\lambda}_j \\
          & = \V_{m;i}\left(\widehat{\lambda}_i - h \psi_i\right) -h \left(\V_{m;i} \Tb_{m;i} + \left(\I_N - \V_{m;i}\,\W_{m;i}^T\right)\J\left(v_m e_m^T + \sum_{k=1}^{i-1}v_{m+k}e_{m+k}^T\right)\right) \sum_{j=1}^i \gamma_{i,j} \widehat{\lambda}_j \\
          & = \V_{m;i}\underbrace{\left(\widehat{\lambda}_i - h\psi_i - h \Tb_{m;i}\sum_{j=1}^i \gamma_{i,j}\,\widehat{\lambda}_j\right)}_{= 0} - \left(\I_N - h\V_{m;i}\,\W_{m;i}^T\right)\J \left(v_m e_m^T + \sum_{k=1}^{i-1} v_{m+k} e_{m+k}^T\right) \sum_{j=1}^i \gamma_{i,j} \widehat{\lambda}_j.
 \end{align*}
 \qed
\end{proof}

\section{Numerical results}
\label{sec:results}

In this section we test the several implementations of the new BOROK methods on a set of problems. As the base ROK methods (and other K-methods) have been previously compared against other standard methods (see \cite{Sandu_2014_ROK, Sandu_2014_expK, Sandu_2017_analytical-JacVec, Sandu_2019_EPIRKW, Tranquilli2019}), we restrict ourselves to comparing BOROK only against the base ROK methods and compatible variations.

The experiments in this section are performed in Matlab, with errors evaluated against reference solutions computed with Matlab built-in \texttt{ode15s} time integrator set to the tightest available tolerance of $100\times \text{eps}$.

All the figures in this section use the same labeling scheme, as follows. First, datasets prefixed by an $L$ denote results from BOROK methods (making use of Lanczos biorthogonalization). Datasets labeled $M$ are results obtained using the stated fixed number of basis vectors, where those labeled $R$ are from an adaptive number of basis vectors, selected based on the stated residual tolerance. Subspace extension (as in section \ref{sec:BOROKext}) is indicated in the label by the suffix $ext$ and can apply to either fixed or adaptive basis size selection. Also, lines labeled $R = \text{tol}$ are experiments where the Arnoldi residual tolerance equal to the tolerance for the adaptive stepsize error controller.

\subsection{Nonstiff test problem: shallow water equations}

First we examine the convergence and relative performance of the methods on a simple nonstiff PDE, the shallow water equations in Cartesian coordinates \cite{Liska97compositeschemes}. The system of equations is:
\begin{align}
\label{eqn:shallowwater}
 \displaystyle\frac{\partial}{\partial t} h + \frac{\partial}{\partial x} (uh) + \frac{\partial}{\partial y} (vh) &= 0, \\[1em]
 \displaystyle\frac{\partial}{\partial t} (uh) + \frac{\partial}{\partial x} \left(u^2 h + \frac{1}{2} g h^2\right) + \frac{\partial}{\partial y} (u v h) &= 0,  \\[1em]
 \displaystyle\frac{\partial}{\partial t} (vh) + \frac{\partial}{\partial x} (u v h) + \frac{\partial}{\partial y} \left(v^2 h + \frac{1}{2} g h^2\right) &= 0, 
\end{align}
with reflective boundary conditions, where $u(x,y,t)$, $v(x,y,t)$ are the flow velocity components, $h(x,y,t)$ is the fluid height, and $g$ is the gravitational acceleration constant. The spatial discretization's right-hand side is implemented using centered finite differences on an $64 \times 64$ grid, and the system \eqref{eqn:shallowwater} is brought to the standard ODE form \eqref{eqn:ivp} with
\begin{equation*}
y = \left[ u \, \, v \, \, h\right]^T \in \R^{N}, \quad f_y(t,y) = \J \in \R^{N \times N}, ~~ N = 3 \times 64 \times 64, ~~ t \in \left[0, 5\right].
\end{equation*}
We call this problem nonstiff, as the largest negative real eigenvalue of the initial Jacobian is less than $12$.

\begin{figure}
  \centering
  \includegraphics[width=4in]{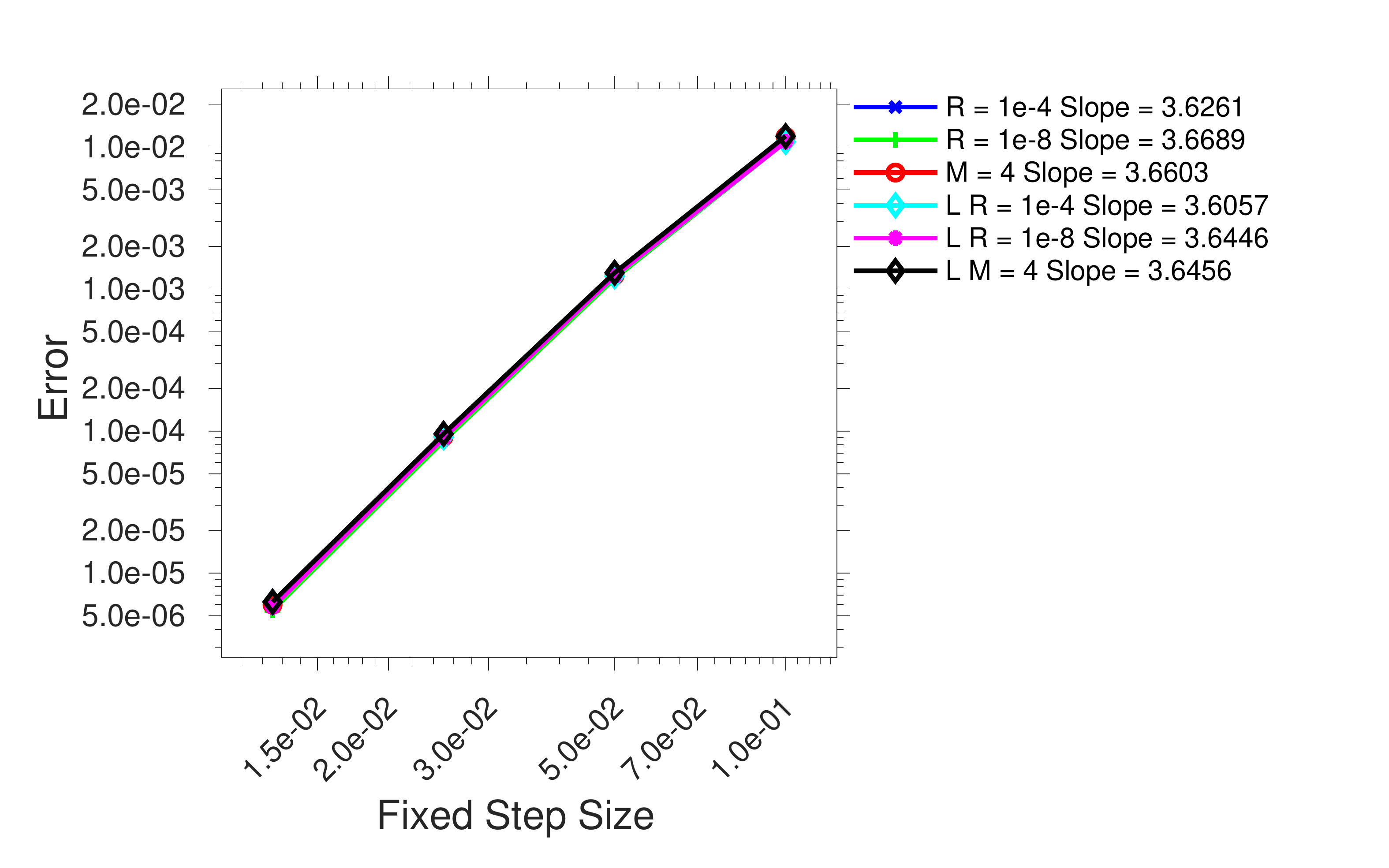}
  \caption{Order of convergence for fixed stepsize ROK and BOROK methods tested on the shallow water equations \eqref{eqn:shallowwater}. All six tested method configurations give nearly identical results, and consequently there is extensive overlap between plot lines.}
  \label{fig:swe_cart_conv}
\end{figure}

Figure \ref{fig:swe_cart_conv} shows the fixed stepsize convergence results. BOROK demonstrates identical convergence behavior to the base ROK methods, with all methods showing their theoretical fourth order convergence rate.

\begin{figure}
\centering
\subfigure[Work-precision diagram.]{
  \includegraphics[width=4in]{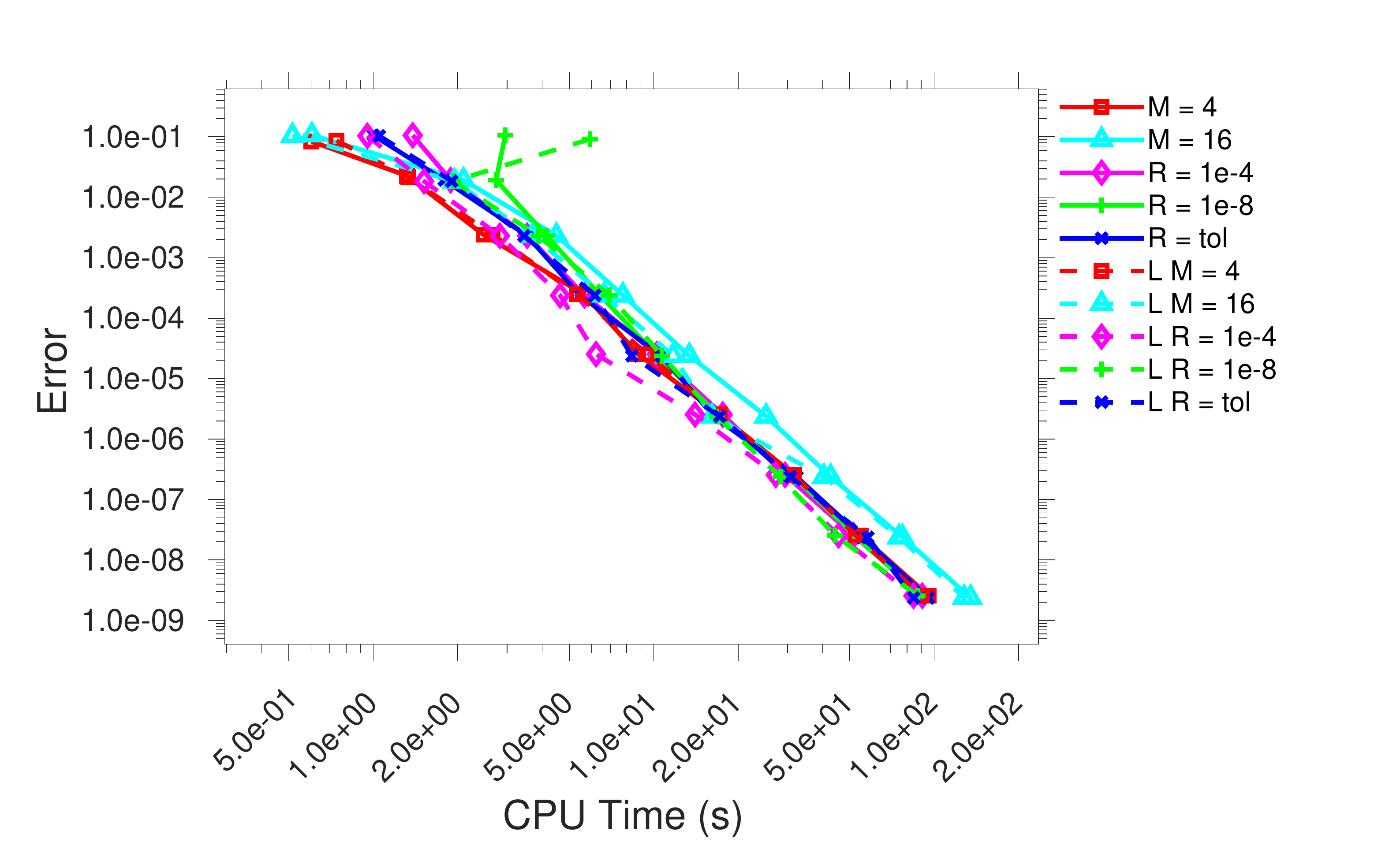}
  \label{fig:swe_cart_times}
} \\
\subfigure[Timesteps to solution.]{
  \includegraphics[width=4in]{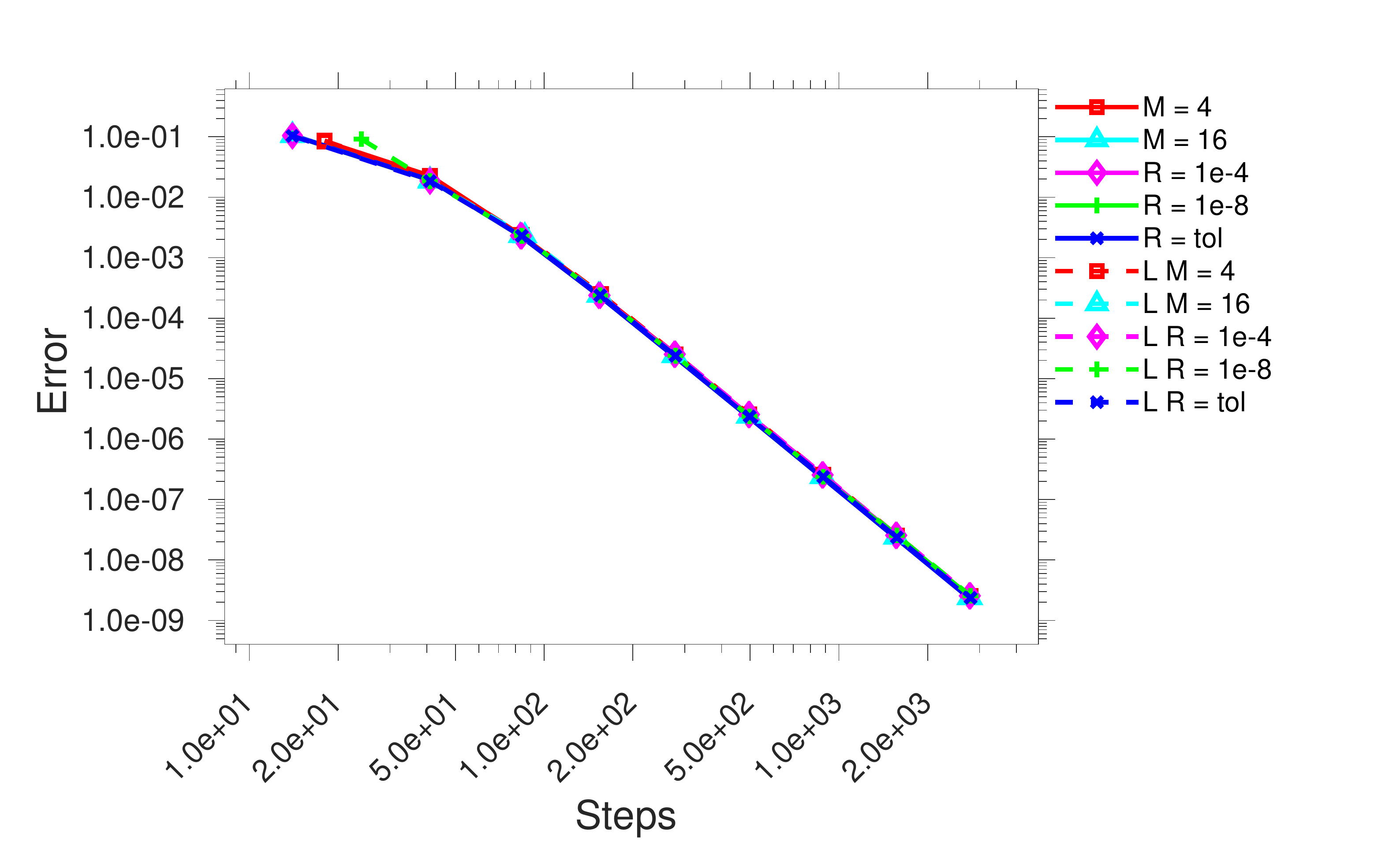}
  \label{fig:swe_cart_steps}
}
\caption{Adaptive step results for the shallow water equations in rectangular coordinates \eqref{eqn:shallowwater}. Solid lines represent ROK methods and dashed lines represent the new BOROK methods. Extensive overlap occurs because of the nonstiff nature of the problem.}
\label{fig:swe_cart}
\end{figure}
Figure \ref{fig:swe_cart} shows the solution accuracy versus the compute time and step count for adaptive stepsize BOROK and ROK implementations tested over a range of tolerances. Figure \ref{fig:swe_cart_times} gives timing results, and Figure \ref{fig:swe_cart_steps} shows the corresponding number of steps. The shallow water equations are nonstiff, so the integrators experience no meaningful stability restrictions, and the number of steps is limited only by the desired tolerance values. Many plots in Figure \ref{fig:swe_cart_steps} fall on top of each other, meaning that all methods have similar accuracy. Thus, the small timing differences observed in Figure \ref{fig:swe_cart_times} are the result of per-timestep costs, with fixed basis size methods with larger $M = 16$ proving the slowest, and all other methods demonstrating very little variation. The only exceptions occur at very loose tolerances where the interaction between the stepsize controller and adaptive basis size methods results in additional overhead. This test effectively demonstrates that there is relatively little difference in the performance of between BOROK and ROK methods for nonstiff problems.

\subsection{Stiff test problem: Gray-Scott reaction-diffusion}

Next, we consider the Gray-Scott reaction-diffusion model arising from a two species system involved in two chemical reaction with retirement as described in \cite{Gray1983, Gray1984}:
\begin{align*}
U + 2V & \rightarrow 3V, \\
V & \rightarrow P.
\end{align*}
With spatial diffusion the full model is described by the following PDE:
\begin{equation}
\label{eqn:grayscott}
\begin{split}
\displaystyle\frac{\partial u}{\partial t} & = \varepsilon_1 \Delta u - u v^2 + F (1 - u), \\[1em]
\displaystyle\frac{\partial v}{\partial t} & = \varepsilon_2 \Delta v + u v^2 - (F + k) v,
\end{split}
\end{equation}
where $\varepsilon_1$ and $\varepsilon_2$ are diffusion rates and $F$ and $k$ are reaction rates. An implementation of this model is part of the ODE Test Problems suite \cite{roberts2019otp}. It employs a second order finite difference spatial discretization on a uniform $128 \times 128$ 2D grid with periodic boundary conditions. As with the shallow water equations above, the system \eqref{eqn:grayscott} is brought to ODE form \eqref{eqn:ivp} with
\[
y = \left[u \,\, v\right]^T \in \R^N, \quad f_y(t,y) = \J \in \R^{N \times N}, \quad N = 2 \times 128 \times 128, \quad t \in \left[0, 2\right].
\]
For faster diffusion or reaction rates, the Gray-Scott model becomes very stiff (in the tested form, $\varepsilon_1 = 0.2$, $\varepsilon_2 = 0.1$, $F = 0.04$, $k = 0.06$, and the largest negative eigenvalue of the initial Jacobian is $-4.2\times 10^3$); this allows us to test the differences between the ROK method's Arnoldi iteration with an $M$-term recurrence, and BOROK's Lanczos biorthogonalization with only a two-term recurrence.

\begin{figure}
\centering
\subfigure[Work-precision diagram.]{
  \includegraphics[width=4in]{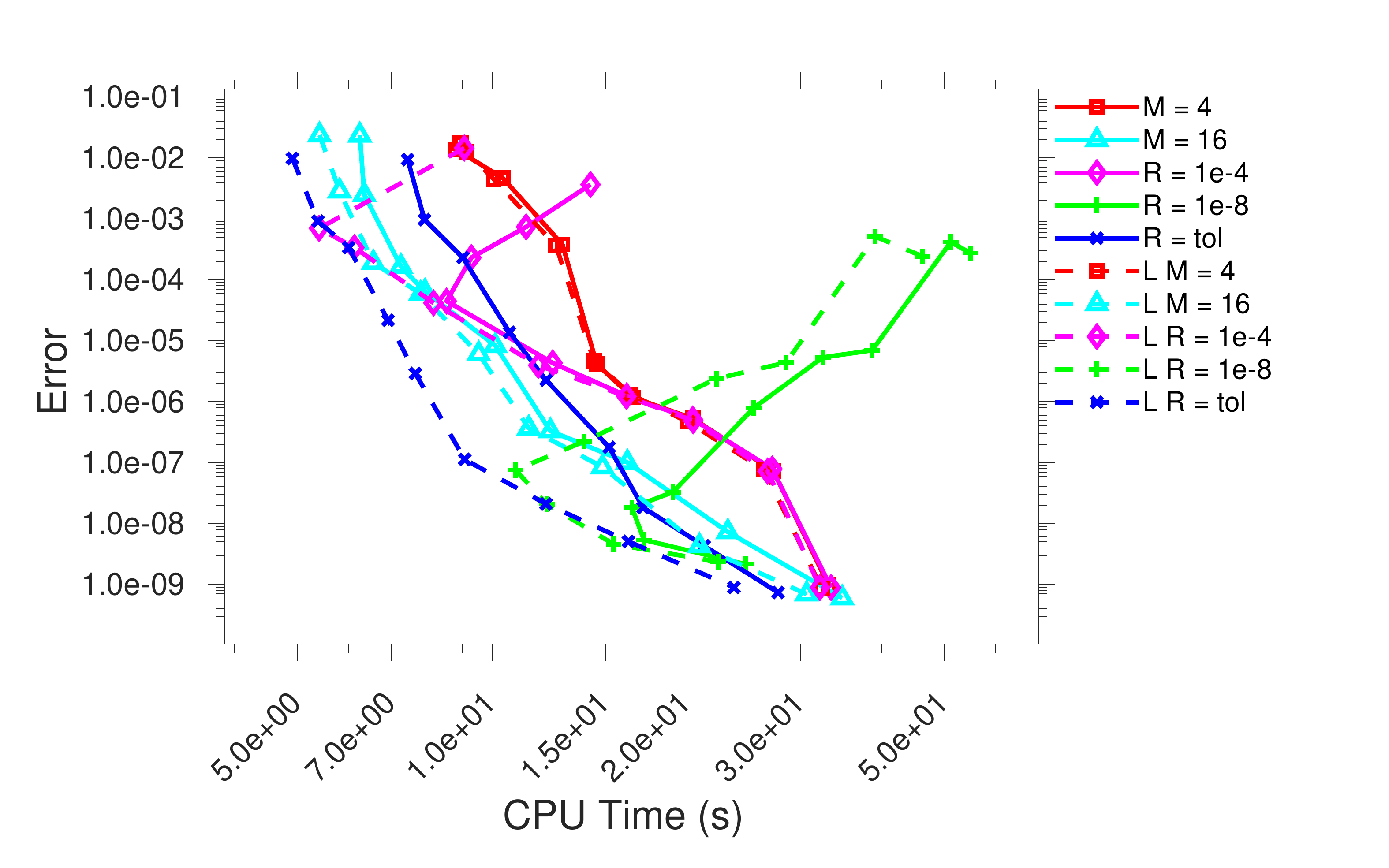}
  \label{fig:gray_scott_times}
} \\
\subfigure[Timesteps to solution.]{
  \includegraphics[width=4in]{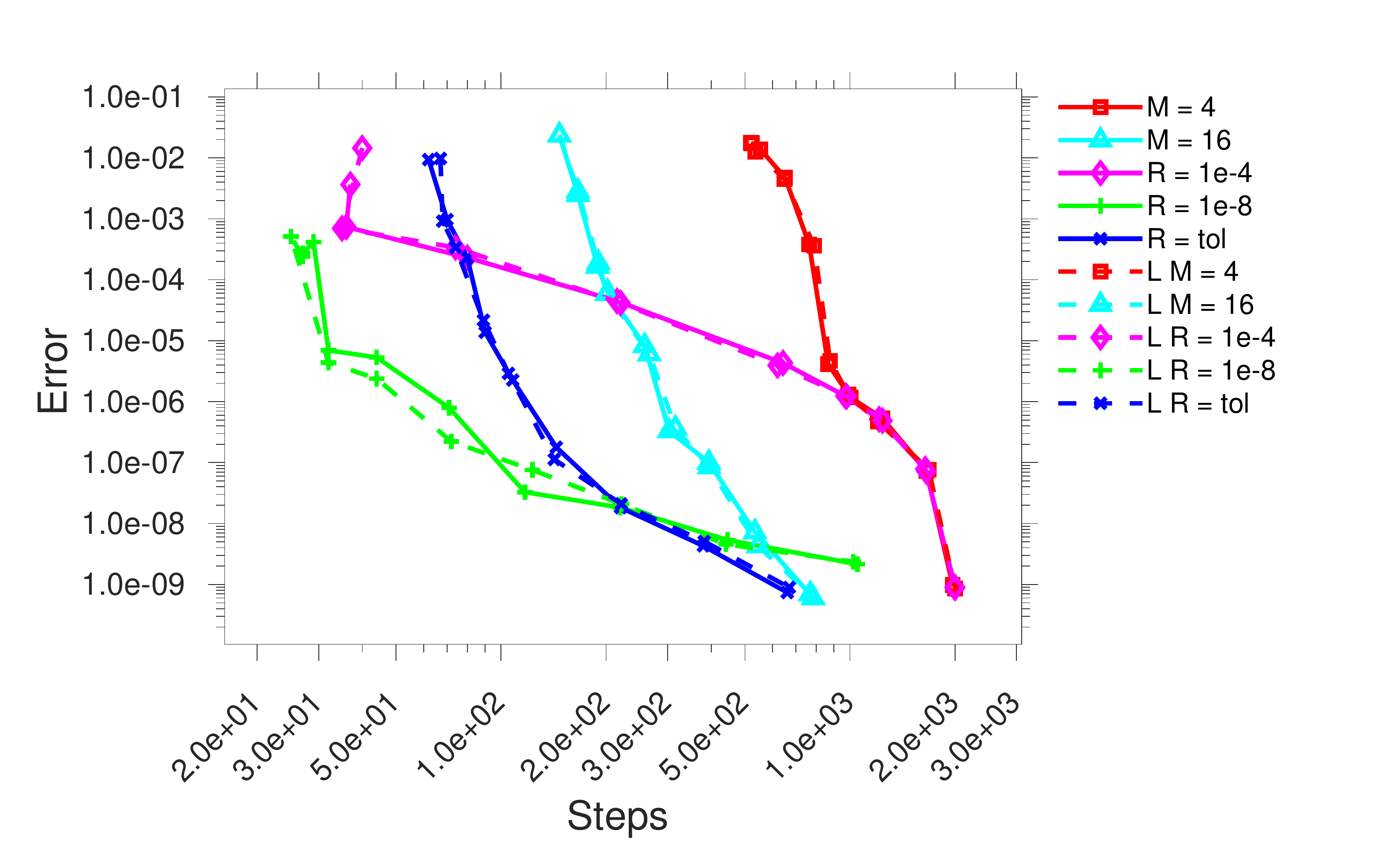}
  \label{fig:gray_scott_steps}
}
\caption{Results for the Gray-Scott reaction-diffusion problem \eqref{eqn:grayscott}. Solid lines represent ROK methods and dashed lines represent the new BOROK methods.}
\label{fig:gray_scott}
\end{figure}

Figure \ref{fig:gray_scott} shows results for fixed and adaptive base implementations of ROK and BOROK methods applied to the Gray-Scott model with a uniform step size. The results in Figure \ref{fig:gray_scott_steps} show that ROK and BOROK methods which share basis size configurations have very similar step counts: the fixed basis size configurations match almost exactly, and only small variations occur in the adaptive configurations. We conclude that replacing ROK's Arnoldi iteration with BOROK's biorthogonalization does not change the numerical stability of the time integration methods. Fixed basis size method results are nearly vertical lines  in the step counts Figure \ref{fig:gray_scott_steps}. This indicates extensive stability-related stepsize restrictions, resulting in large step counts which change very little as the required tolerance is tightened. 

The adaptive basis size configurations also demonstrate interesting stability behavior: for set residual tolerance methods ($R = 1e-4$ and $R = 1e-8$), we see regions where no stability-related stepsize restriction is present (usually centered around the method tolerances that match the set residual tolerance). So, the $R = 1e-4$ configuration sees no stepsize restriction for method tolerances in the range of $10^{-2}$ to $10^{-4}$, but for tighter method tolerances this configurations falls back to the behavior of the $M = 4$ fixed basis size configuration. The $R = 1e-8$ adaptive basis size configuration encounters the least stepsize restriction of all the tested configurations, however looking at the timing results in Figure \ref{fig:gray_scott_times} shows the downside: by over-resolving the linear system solutions, these configurations take the least number of steps but take the longest time to solution overall. Thus, by combining the best results from each adaptive basis size configuration, we justify the $R = \text{tol}$ configurations, which match the residual tolerance to the method tolerance, resulting in a medium number of steps taken and the most consistently good timing performance.

Timing results reported in Figure \ref{fig:gray_scott_times} illustrate the most significant benefit of replacing the  Arnoldi iteration with Lanczos biorthogonalization: due to the large basis sizes needed for good stability on this stiff problem, for every basis configuration (aside from $M = 4$) we see a significant decrease in time to solution for the BOROK methods over the ROK methods. For the fixed basis size $M = 16$ configuration, the timing gap remains largely constant across all data points, but for adaptive basis sizes, e.g., the $R = 1e-4$ configuration, the time gap varies with the expected size of the basis needed to satisfy the tolerance.

\begin{remark}
For adaptive basis size configurations, as the required tolerance tightens and stepsizes become restricted by accuracy considerations, one expects that a given residual tolerance can be satisfied with fewer basis vectors. Theorem \ref{thm:stgresidual} shows that the residuals scale with the stepsize $h$, so a smaller $h$ may balance larger contributions to the residual due to fewer vectors in the basis.
\end{remark}

\begin{figure}
\centering
\subfigure[Work-precision diagram.]{
  \includegraphics[width=4in]{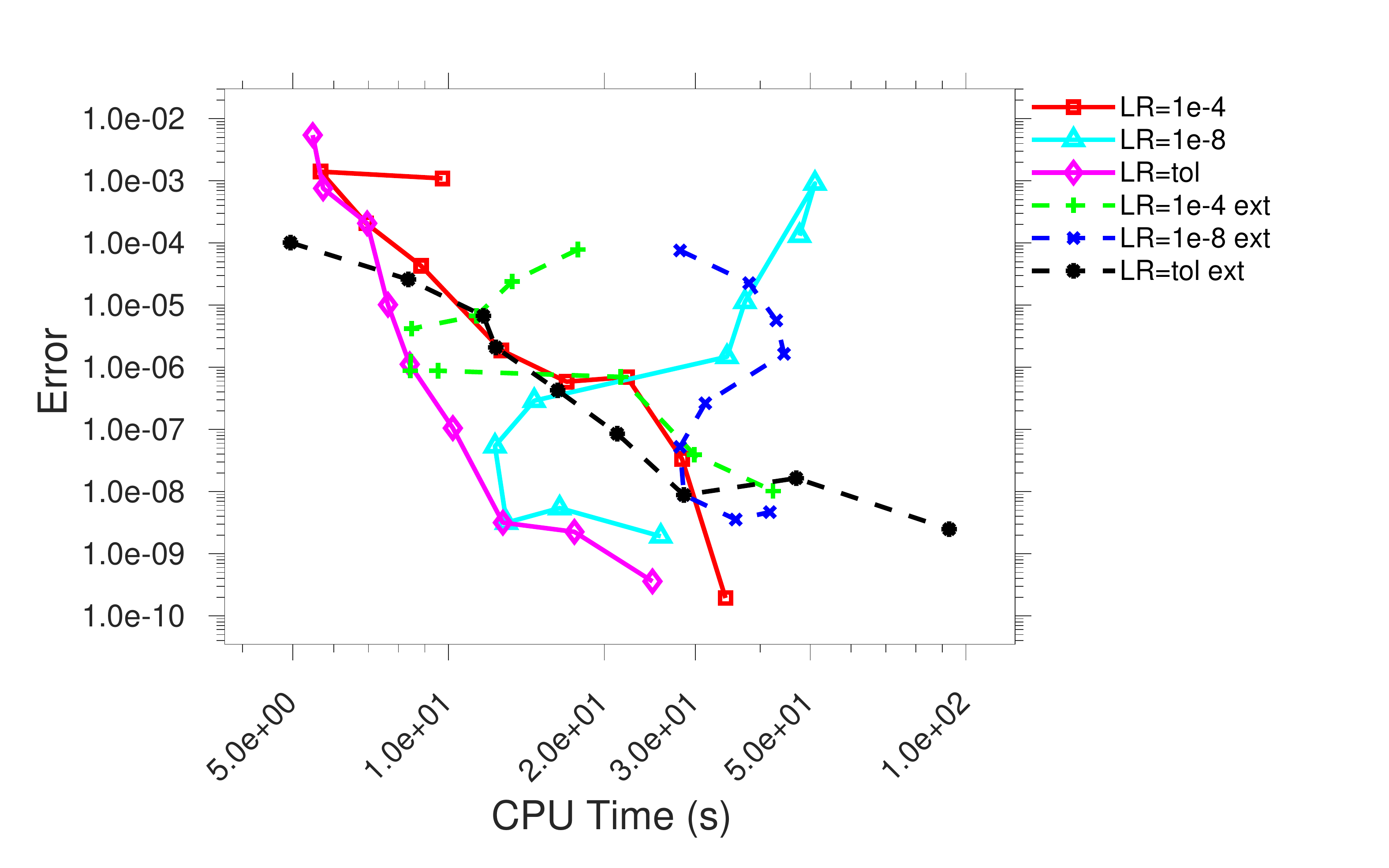}
  \label{fig:gray_scott_ext_times}
} \\
\subfigure[Timesteps to solution.]{
  \includegraphics[width=4in]{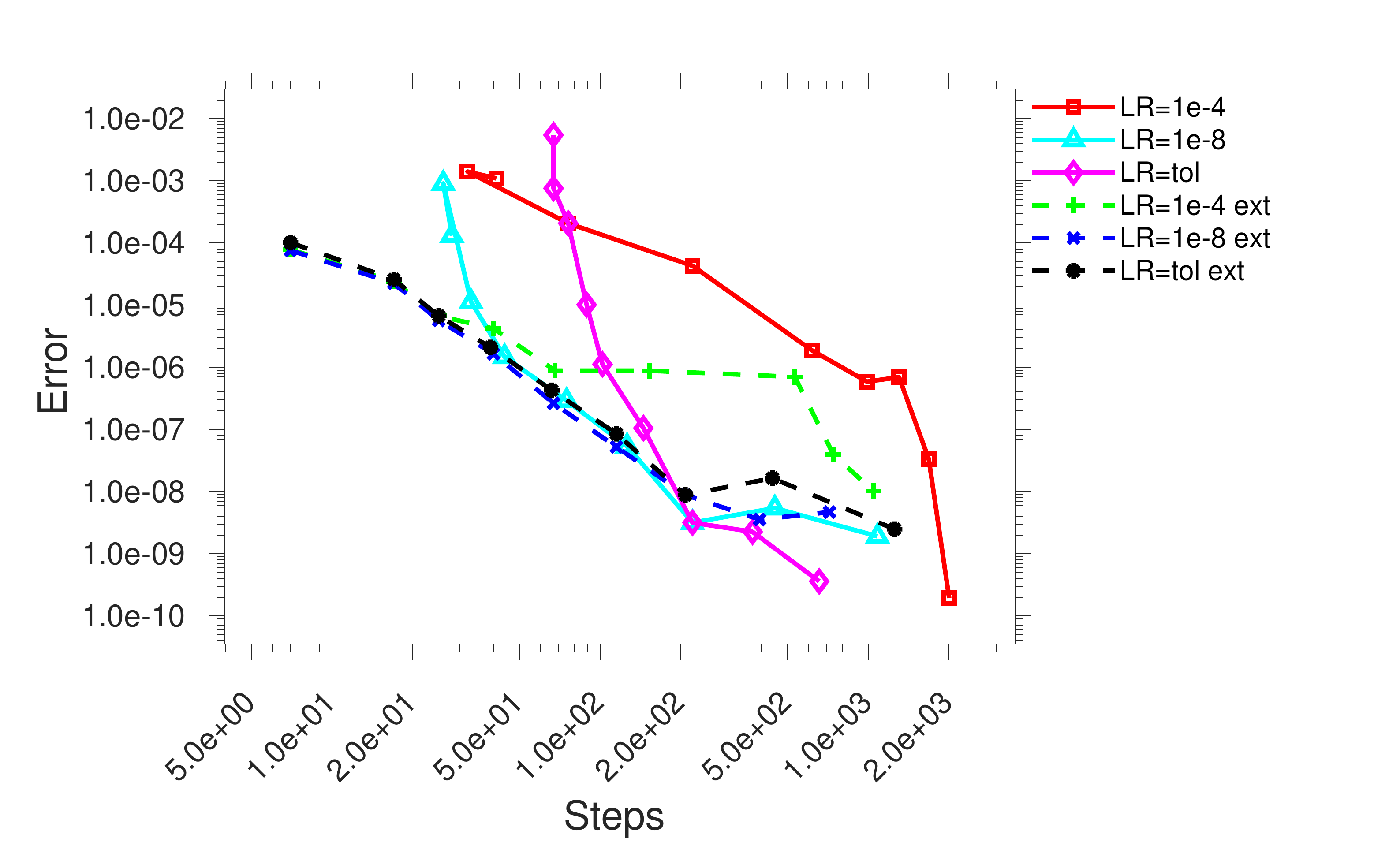}
  \label{fig:gray_scott_ext_steps}
}
\caption{Extended subspace results for the Gray-Scott reaction-diffusion problem \eqref{eqn:grayscott}. Solid lines represent BOROK methods and dashed lines represent the BOROK methods with subspace extension.}
\label{fig:gray_scott_ext}
\end{figure}

Figure \ref{fig:gray_scott_ext} shows results for BOROK applied to the Gray-Scott model with and without subspace extension (as described in Section \ref{sec:BOROKext}). The step counts in Figure \ref{fig:gray_scott_ext_steps} show that the extended basis configurations do improve the stability of the BOROK method, with each corresponding extended basis configuration requiring fewer steps to obtain similar error levels. We notice that the $LR = 1e-8 \text{ ext}$ and $LR = \text{tol ext}$ configurations appear to have their stepsizes bound only by accuracy considerations. However, Figure \ref{fig:gray_scott_ext_times} shows the timing cost of these stability improvements. Because the basis extension requires the solution to a least-squares problem of size $N\times(M+1)$ at each stage, adding basis extension to the method is very costly, and the moderate reduction in step count is insufficient to make up for the additional cost on this model. The Gray-Scott model has a very sparse Jacobian matrix, so matrix-vector products with the Jacobian (or it's transpose) are cheap to compute, making the addition of vectors to the Krylov space inexpensive. Thus, for this model, it is more efficient to add basis vectors via the Lanczos biorthogonalization procedure than it is to extend the basis with external vectors from outside the subspace.

\subsection{Stiff test problem: quasi-geostrophic model}

The 1.5 layer quasi-geostrophic (QG) model from \cite{Sakov2007} provides a simplified representation of ocean dynamics via the equations:
\begin{align}
\label{eqn:qgmodel}
  \displaystyle\frac{\partial q}{\partial t} & = -\psi_x - \varepsilon J\left(\psi,q\right) - A\Delta^3\psi + 2\pi \text{sin}\left(2\pi y\right), \\
  q & = \Delta \psi - F\psi,
\end{align}
where $J\left(\psi,q\right) = \psi_x q_y - \psi_y q_x$, and $F = 1600$, $\varepsilon = 10^{-5}$ and $A = 10^{-5}$ are constants. The tested implementation for the QG model comes from the ODE Test Problems suite \cite{roberts2019otp} and has been used previously in \cite{Popov2019}. This implementation discretizes the system \eqref{eqn:qgmodel} in terms of the stream function $\psi$, on the spatial domain $(x,y) \in [0,1]^2$, using second order central finite differences and homogeneous Dirichlet boundary conditions on a $255 \times 255$ grid. Integration is performed over a time span of $t \in \left[0, 0.01\right]$. This test problem is stiff, as the largest eigenvalue of the initial Jacobian in absolute value is approximately $-2.7 \times 10^6$. The QG model requires the solution to a Helmholtz equation, which results in a portion of the Jacobian being dense. This greatly increases the cost of Jacobian-vector products, making it a good candidate for testing the performance of BOROK schemes with basis extension.

\begin{figure}
\centering
\subfigure[Work-precision diagram.]{
  \includegraphics[width=4in]{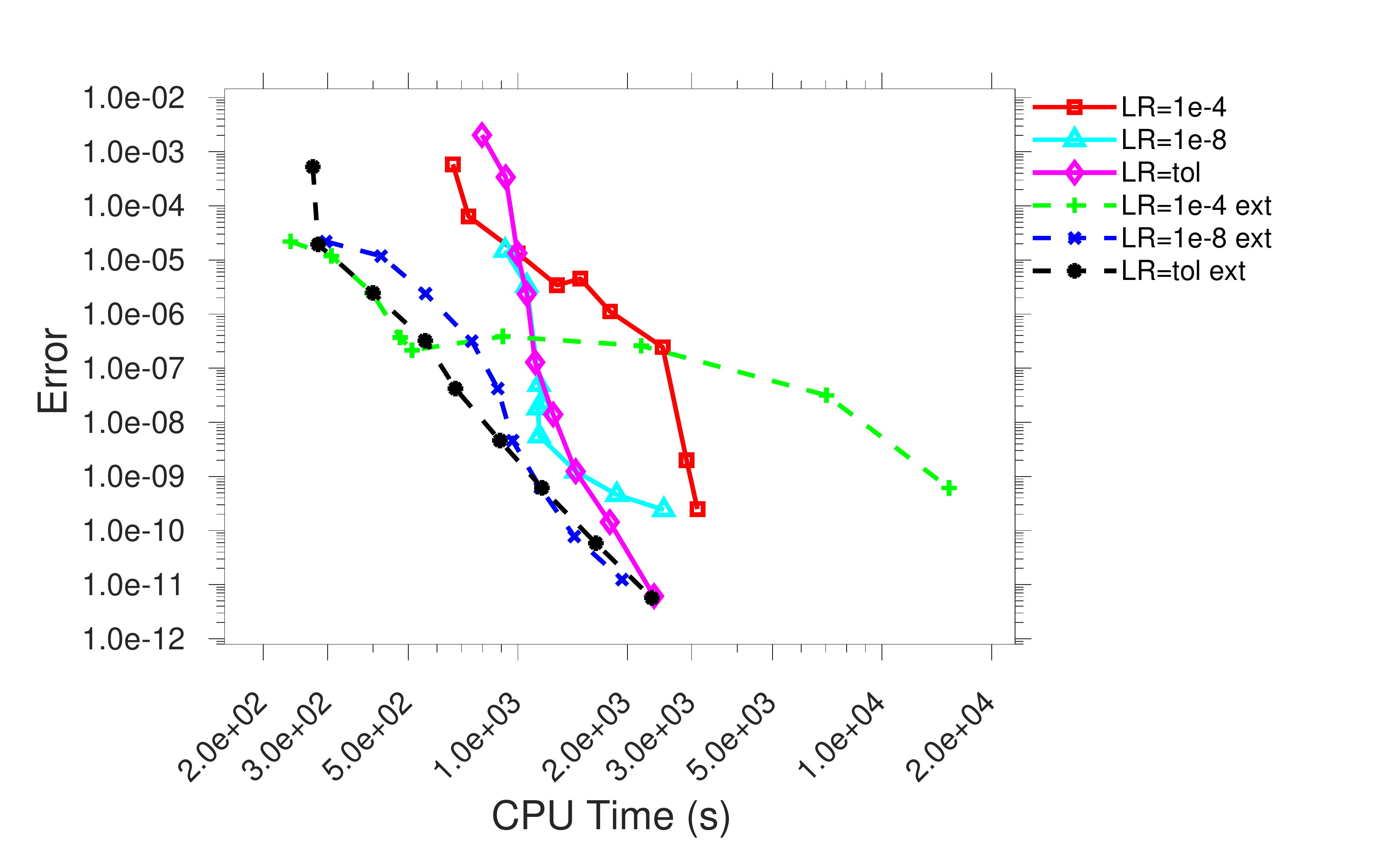}
  \label{fig:qg_ext_times}
} \\
\subfigure[Timesteps to solution.]{
  \includegraphics[width=4in]{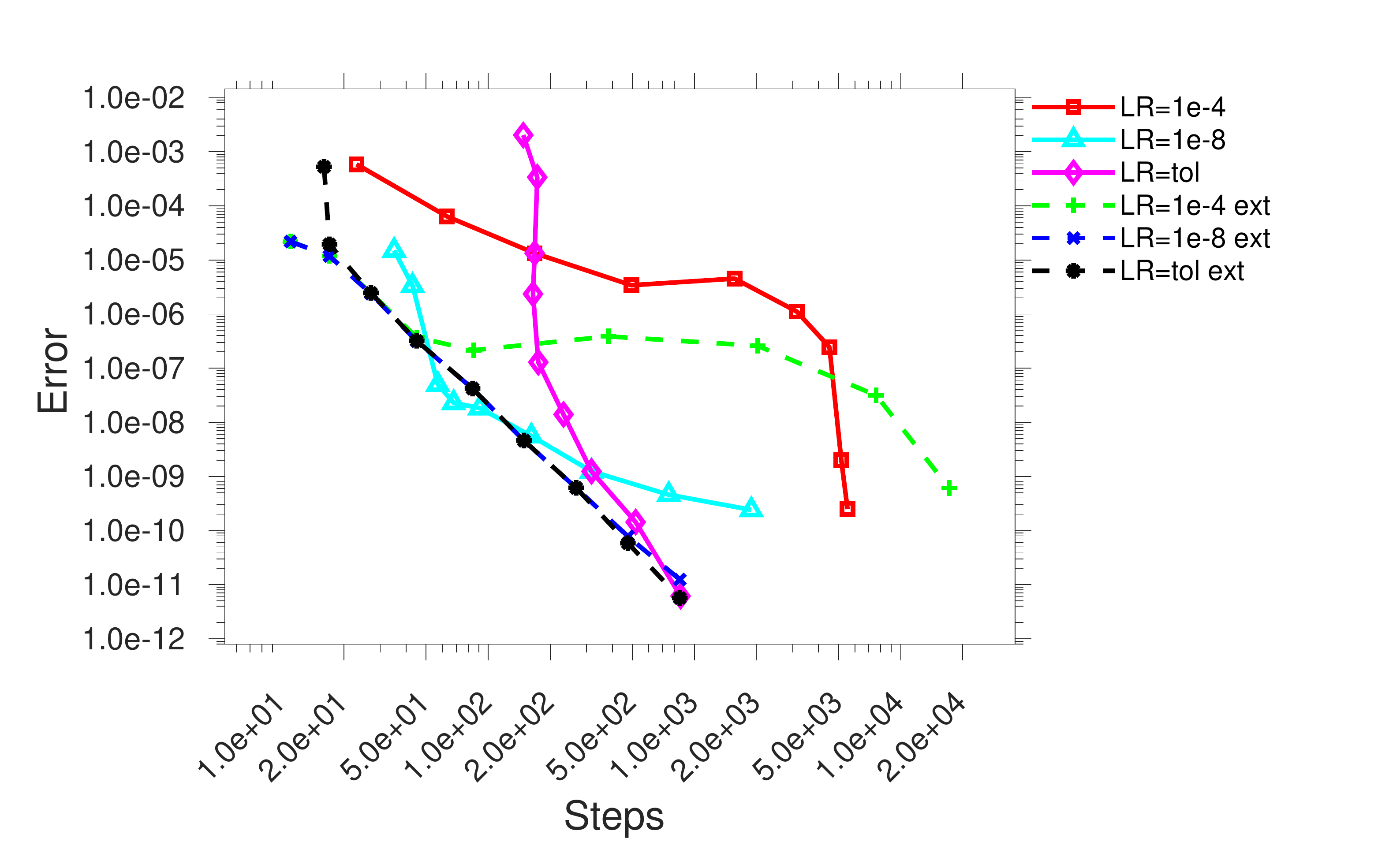}
  \label{fig:qg_ext_steps}
}
\caption{Extended subspace with adaptive basis size results for the quasi-geostrophic problem \eqref{eqn:qgmodel}. Solid lines represent BOROK methods and dashed lines represent the BOROK methods with subspace extension.}
\label{fig:qg_ext}
\end{figure}

Figure \ref{fig:qg_ext} shows results for BOROK with and without basis extension applied to the QG model. Figure \ref{fig:qg_ext_steps} illustrates the stability improvement acquired by extending the bases. The corresponding performance improvement is illustrated in Figure \ref{fig:qg_ext_times}, with the $LR = \text{tol} ext$ showing especially good performance relative to the unextended basis configuration.

\begin{figure}
\centering
\subfigure[Work-precision diagram.]{
  \includegraphics[width=4in]{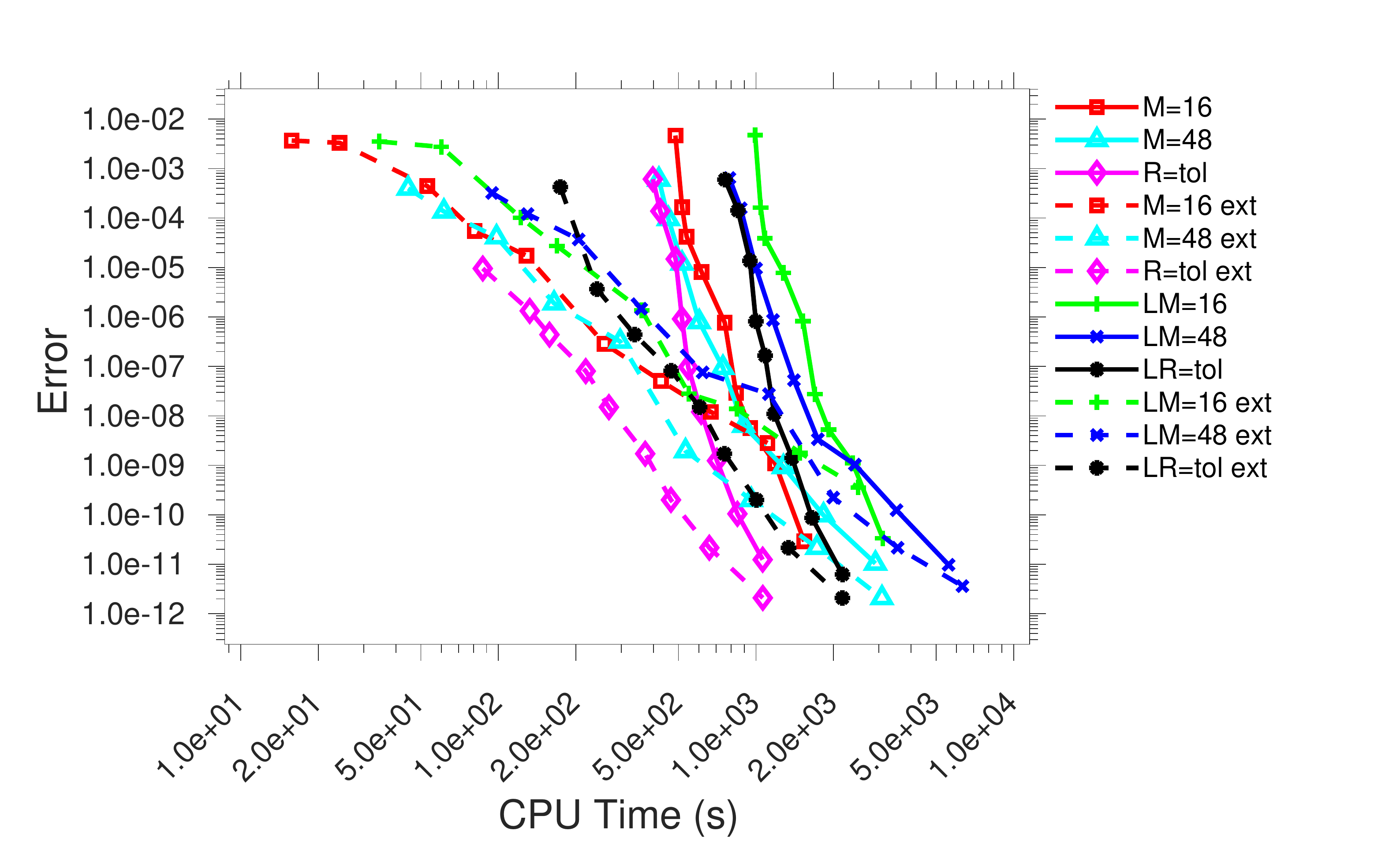}
  \label{fig:qg_ext_fixed_times}
} \\
\subfigure[Timesteps to solution.]{
  \includegraphics[width=4in]{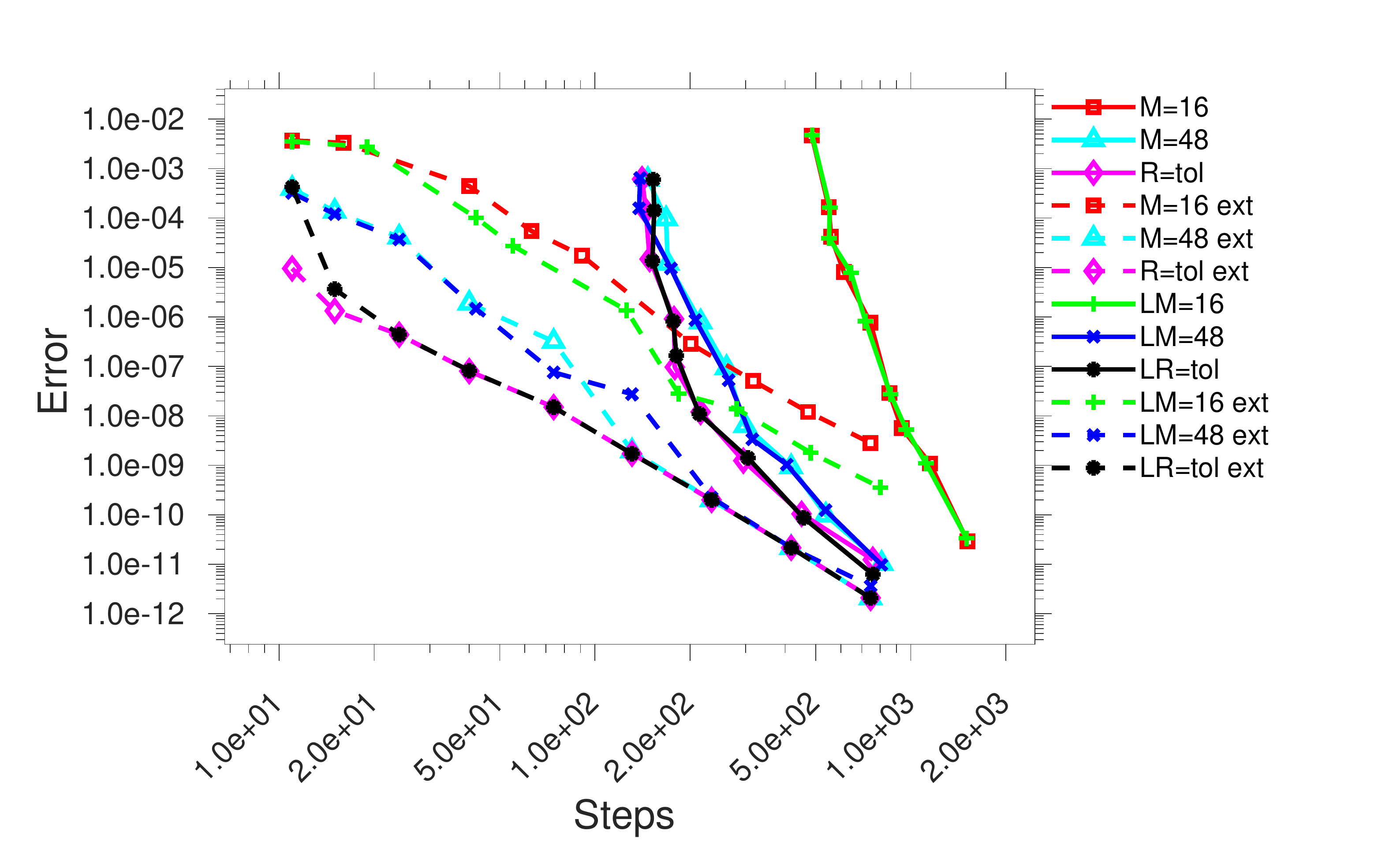}
  \label{fig:qg_ext_fixed_steps}
}
\caption{Extended subspace with fixed basis size results for the quasi-geostrophic problem \eqref{eqn:qgmodel}. Solid lines represent BOROK methods and dashed lines represent the BOROK methods with subspace extension.}
\label{fig:qg_ext_fixed}
\end{figure}

Figure \ref{fig:qg_ext_fixed} compares BOROK against the ROK method with basis extension (as described in \cite{Tranquilli2019}, which does not require solving a least-squares problem). BOROK with basis extension still proves less efficient in the timings, as shown in Figure \ref{fig:qg_ext_fixed_times}. However, the step counts shown in Figure \ref{fig:qg_ext_fixed_steps} demonstrate  that  BOROK and ROK methods have similar stability (and similar stability improvements acquired from basis extension for each). From the timing differences between ROK with and without basis extension and between BOROK with and without basis extension we see very similar timing improvements for each. So, the timing advantage that ROK has for the QG model does not appear to come from the cost of basis extension for BOROK, as the ROK methods see similar performance improvements over BOROK for every basis configuration. Due to the large cost associated with performing Jacobian-vector products and Jacobian-transpose-vector products for this model, the likely cause of this performance discrepancy is the need for an additional Jacobian-transpose-vector product at each iteration of the Lanczos biorthogonalization procedure used in BOROK, which is unnecessary for ROK's Arnoldi iteration. Thus, the doubled number of matrix-vector products dominates the timing results.

\section{Conclusions}
\label{sec:conclusion}

Rosenbrock-Krylov and exponential-Krylov time integration methods are aimed at solving large systems of ordinary differential equations, such as those resulting from the space semi-discretization of partial differential equations in the method of lines framework. These lightly-implicit methods gain efficiency by using of a Krylov subspace approximation to the Jacobian. Since no restarting is allowed, the underlying Arnoldi process becomes computationally expensive when large subspace dimensions are required, e.g., in the solution of stiff systems. 

This work constructs the BOROK family of time integration methods where the embedded Arnoldi iteration is replaced with a biorthogonal Lanczos procedure to obtain Krylov-based Jacobian approximations. This modification allows the BOROK methods to take advantage of the Lanczos procedure's short two-term recurrence, making it possible to use a larger numbers of basis vectors to increase numerical stability of the time integrator, at a moderate computational cost increase. New computational formulas are derived for the Lanczos Jacobian approximation. We revisit the order conditions of Krylov time discretizations to accommodate the new approximate Jacobians. Stability considerations, the link between stability and accuracy, and approaches to adaptively extend the Lanczos bases to further improve stability  are discussed. 

Numerical experiments demonstrate that, while using the same coefficients, BOROK methods retain the same convergence order, accuracy and stability characteristics as the base ROK methods. For nonstiff problems where only very small basis sizes are needed for stability, BOROK methods demonstrate no meaningful performance difference from the base ROK methods. For stiff problems that require large subspaces to approximate the Jacobians, the reduced cost of computing additional basis vectors results in a significant performance advantage for BOROK methods. For problems where  Jacobian-vector products are very expensive (such as those with nearly dense Jacobians) our approach of extending the Krylov space with each method's stage right-hand side is beneficial. However, for our test problem, the performance increase was insufficient to overcome the cost of doubling the number of matrix-vector products as compared to ROK's Arnoldi iteration.

Future work in the class of Krylov time integration methods will address BOROK's requirement for Jacobian-transposed-vector products. A full consideration of the techniques used to produce linear solvers such as BiCGStab, which use of Lanczos biorthogonalization but remove the need for matrix-transpose-vector products at the cost of a small number of extra terms in the recurrence, could bear fruit. Another direction under current pursuit is the extension of Krylov methods to a class of linearly-implicit multistep methods, which does not require the explicit storage of any basis vectors, allowing for the use of nearly any of the wide variety of Krylov-based iterative linear solvers.

\section*{Acknowledgements}

Declaration of Interest: None.

This work has been supported by NSF CCF--1613905, NSF ACI--1709727, AFOSR DDDAS FA9550--17--1--0015, and by the Computational Science Laboratory at Virginia Tech.

This work was performed under the auspices of the U.S. Department of Energy by Lawrence Livermore National Laboratory under Contract DE-AC52-07NA27344.

\section*{References}
%
\bibliographystyle{model1b-num-names}
\bibliography{bibliography,sandu}

\end{document}